\newtheorem{Proposition}{Proposition}
\newtheorem{Theorem}{Theorem}
\newtheorem{Remark}{Remark}
\numberwithin{equation}{section} \pagestyle{headings}
\begin{document}

\newcommand{\Hess}{\mathrm{Hess}\, }
\newcommand{\hess}{\mathrm{hess}\, }
\newcommand{\cut}{\mathrm{cut}}
\newcommand{\ind}{\mathrm{ind}}
\newcommand{\ess}{\mathrm{ess}}
\newcommand{\longra}{\longrightarrow}
\newcommand{\eps}{\varepsilon}
\newcommand{\ra}{\rightarrow}
\newcommand{\vol}{\mathrm{vol}}
\newcommand{\Ricc}{\mathrm{Ricc}}
\newcommand{\di}{\mathrm{d}}
\newcommand{\R}{\mathbb R}
\newcommand{\C}{\mathbb C}
\newcommand{\N}{\mathbb N}
\newcommand{\esse}{\mathbb S}
\newcommand{\bull}{\rule{2.5mm}{2.5mm}\vskip 0.5 truecm}
\newcommand{\binomio}[2]{\genfrac{}{}{0pt}{}{#1}{#2}} 
\newcommand{\metric}{\langle \, , \, \rangle}
\newcommand{\lip}{\mathrm{Lip}}
\newcommand{\loc}{\mathrm{loc}}
\newcommand{\diver}{\mathrm{div}}
\newcommand{\disp}{\displaystyle}
\newcommand{\rad}{\mathrm{rad}}
\newcommand{\mmetric}{\langle\langle \, , \, \rangle\rangle}
\newcommand{\sn}{\mathrm{sn}}
\newcommand{\cn}{\mathrm{cn}}
\newcommand{\ink}{\mathrm{in}}

\newcommand{\rmi}{(\rm i)\,}
\newcommand{\rmii}{(\rm ii)\,}
\newcommand{\rmiii}{(\rm iii)\,}
\newcommand{\rmiv}{(\rm iv)\,}
\newcommand{\rmv}{(\rm v)\,}
\newcommand{\rmvi}{(\rm vi)\,}
\newcommand{\rmvii}{(\rm vii)\,}
\newcommand{\rmviii}{(\rm viii)\,}
\newcommand{\vp}{\varphi}
\renewcommand{\div}[1]{{\mathop{\mathrm div}}\left(#1\right)}
\newcommand{\divphi}[1]{{\mathop{\mathrm div}}\bigl(\vert \nabla #1
\vert^{-1} \varphi(\vert \nabla #1 \vert)\nabla #1   \bigr)}
\newcommand{\nablaphi}[1]{\vert \nabla #1\vert^{-1}
\varphi(\vert \nabla #1 \vert)\nabla #1}
\newcommand{\modnabla}[1]{\vert \nabla #1\vert }
\newcommand{\modnablaphi}[1]{\varphi\bigl(\vert \nabla #1 \vert\bigr)
\vert \nabla #1\vert }
\renewcommand{\a}{\alpha}
\newcommand{\e}{\epsilon}
\newcommand{\D}{\Delta}
\renewcommand{\d}{\delta}
\newcommand{\ty}{\infty}
\newcommand{\g}{\gamma}
\renewcommand{\b}{\beta}
\newcommand{\ds}{\displaystyle}
\newcommand{\cL}{\mathcal{L}}
\newcommand{\essem}{\mathds{S}^m}
\newcommand{\erre}{\mathds{R}}
\newcommand{\errem}{\mathds{R}^m}
\newcommand{\enne}{\mathds{N}}
\newcommand{\acca}{\mathds{H}}

\newcommand{\cvett}{\Gamma(TM)}
\newcommand{\cinf}{C^{\infty}(M)}
\newcommand{\sptg}[1]{T_{#1}M}
\newcommand{\partder}[1]{\frac{\partial}{\partial {#1}}}
\newcommand{\partderf}[2]{\frac{\partial {#1}}{\partial {#2}}}
\newcommand{\ctloc}{(\mathcal{U}, \varphi)}
\newcommand{\fcoord}{x^1, \ldots, x^n}
\newcommand{\ddk}[2]{\delta_{#2}^{#1}}
\newcommand{\christ}{\Gamma_{ij}^k}
\newcommand{\ricc}{\operatorname{Ricc}}
\newcommand{\supp}{\operatorname{supp}}
\newcommand{\sgn}{\operatorname{sgn}}
\newcommand{\rg}{\operatorname{rg}}
\newcommand{\inv}[1]{{#1}^{-1}}
\newcommand{\id}{\operatorname{id}}
\newcommand{\jacobi}[3]{\sq{\sq{#1,#2},#3}+\sq{\sq{#2,#3},#1}+\sq{\sq{#3,#1},#2}=0}
\newcommand{\lie}{\mathfrak{g}}
\newcommand{\wedgedot}{\wedge\cdots\wedge}
\newcommand{\rp}{\erre\mathds{P}}
\newcommand{\II}{\operatorname{II}}
\newcommand{\gradh}[1]{\nabla_{H^m}{#1}}
\newcommand{\absh}[1]{{\left|#1\right|_{H^m}}}
\newcommand{\mob}{\mathrm{M\ddot{o}b}}
\newcommand{\mab}{\mathfrak{m\ddot{o}b}}
\newcommand{\foc}{\mathrm{foc}}
\newcommand{\F}{\mathcal{F}}
\newcommand{\Cf}{\mathcal{C}_f}
\newcommand{\cutf}{\mathrm{cut}_{f}}
\newcommand{\Cn}{\mathcal{C}_n}
\newcommand{\cutn}{\mathrm{cut}_{n}}
\newcommand{\Ca}{\mathcal{C}_a}
\newcommand{\cuta}{\mathrm{cut}_{a}}
\newcommand{\cutc}{\mathrm{cut}_c}
\newcommand{\cutcf}{\mathrm{cut}_{cf}}
\newcommand{\rk}{\mathrm{rk}}
\newcommand{\crit}{\mathrm{crit}}
\newcommand{\diam}{\mathrm{diam}}
\newcommand{\haus}{\mathcal{H}}
\newcommand{\po}{\mathrm{po}}

\author{Bruno Bianchini \and Luciano Mari \and Marco Rigoli}
\title{\textbf{Some generalizations of Calabi compactness theorem}}
\date{}
\maketitle
\large \begin{center} Fortaleza \\[0.2cm]
\normalsize dedicated to Gervasio Colares for his $80^{\mathrm{th}}$ birthday
\end{center}

\scriptsize
$$
\begin{array}{cc}
\begin{array}{c}
\text{Dipartimento di Matematica Pura e Applicata}\\
\text{Universit\`a degli Studi di Padova}\\
\text{Via Trieste 63}\\
\text{I-35121 Padova, ITALY}\\
\text{e-mail: bianchini@dmsa.unipd.it}
\end{array} & \qquad \begin{array}{c}
\text{Dipartimento di Matematica}\\
\text{Universit\`a degli Studi di Milano}\\
\text{Via Saldini 50}\\
\text{I-20133 Milano, ITALY}\\
\text{e.mail: lucio.mari@libero.it, marco.rigoli@unimi.it}
\end{array}
\end{array}
$$
\vspace{0.5cm}

\begin{abstract}
\footnote{\textbf{Mathematic subject classification 2010}: primary
53C20; secondary 34C10.\par
\textbf{ \ Keywords}: compactness, Myers’ type theorems, oscillation, positioning of zeros.}
In this paper we obtain generalized Calabi-type compactness criteria for complete Riemannian manifolds that allow the presence of negative amounts of Ricci curvature. These, in turn, can be rephrased as new conditions for the positivity, for the existence of a first zero and for the nonoscillatory-oscillatory behaviour of a solution $g(t)$ of $g''+Kg=0$, subjected to the initial condition $g(0)=0$, $g'(0)=1$. A unified approach for this ODE, based on the notion of critical curve, is presented. With the aid of suitable examples, we show that our new criteria are sharp and, even for $K\ge 0$, in borderline cases they improve on previous works of Calabi, Hille-Nehari and Moore. 
\end{abstract}
 \maketitle

\normalsize

\section{Basic comparison and Myers type compactness result}
Hereafter, we consider a connected, complete Riemannian manifold $(M,\metric)$, and a chosen reference origin $o \in M$. Let $D_o = M \backslash (\left\{ o \right\} \cup \mathrm{cut} (o))$ be the maximal domain of normal coordinates centered at $o$, and denote with $r(x)$ the distance function from $o$. The classical Bonnet-Myers theorem, showing the compactness of $M$ under the condition
\begin{equation} \label{uno}
\mathrm{Ricc} \ge (m-1) B^2 \metric  
\end{equation}
for some $B>0$, can be proved as a consequence of the Laplacian comparison theorem. Indeed, let us recall the following generalized form of this latter.
\begin{Theorem}[\textbf{Theorem 2.4 of \cite{prs}}]\label{teo_laplacianodasopra}
Let $M$ be as above. Assume that the radial Ricci curvature satisfies 
\begin{equation}\label{richiericci}
\mathrm{Ricc}(\nabla r,\nabla r)(x) \ge -(m-1)G(r(x)) \qquad
\text{on } \ M,
\end{equation}
for some function $G\in C^0(\R^+_0)$, and let $g\in C^2(\R^+_0)$
be a solution of
\begin{equation}\label{oohh2}
\left\{ \begin{array}{l} g''-Gg \ge 0 \\[0.2cm]
g(0)=0, \quad g'(0)=1. \end{array}\right.
\end{equation}
Let $(0,R_0)$ (possibly $R_0 =+\infty$) be the maximal interval where $g$ is positive. Then,
\begin{equation}\label{dentroBR}
D_o\subset B_{R_0}
\end{equation}
and the inequality
\begin{equation}\label{lasop2}
\Delta r(x) \le (m-1)\frac{g'(r(x))}{g(r(x))}
\end{equation}
holds pointwise on $D_o$ and weakly on $M$.
\end{Theorem}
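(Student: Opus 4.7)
My plan is to derive an ODE inequality for $\Delta r$ along each minimising radial geodesic and then compare it, via a Riccati argument, with the ODE satisfied by the ratio $g'/g$. To implement the first step, I fix $x\in D_o$, take the minimising unit-speed geodesic $\gamma:[0,r(x)]\to M$ from $o$ to $x$ (whose interior lies in $D_o$), and apply Bochner's formula to $r$ along $\gamma$. Using $|\nabla r|=1$ on $D_o$, the algebraic inequality $|\Hess r|^2\ge(\Delta r)^2/(m-1)$ (which follows from $\Hess r(\nabla r,\cdot)=0$), and the radial Ricci bound \eqref{richiericci}, one obtains the Riccati inequality $u' + u^2\le G$ for $u(t):=\Delta r(\gamma(t))/(m-1)$. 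On the other side, a direct computation from \eqref{oohh2} shows that $h(t):=g'(t)/g(t)$ satisfies the reverse inequality $h'+h^2\ge G$ on $(0,R_0)$.

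For the comparison, I subtract the two inequalities: $w:=u-h$ then obeys the linear differential inequality $w' + (u+h)\, w \le 0$. The standard asymptotics $\Delta r(\gamma(t))=(m-1)/t+O(1)$ and $g(t)=t+O(t^2)$ near $t=0$ imply $u(t),h(t)=1/t+O(1)$, so $w$ is bounded near the origin while the integrating factor $\exp\int_\eps^t(u+h)(s)\,\di s$ behaves like $(t/\eps)^2$ as $\eps\to 0^+$. Multiplying through and sending $\eps\to 0^+$ I expect to conclude $w(t)\le 0$, that is \eqref{lasop2}, pointwise on $D_o$ within the common interval of validity.

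For the inclusion \eqref{dentroBR}, I argue by contradiction. If some $x\in D_o$ had $r(x)\ge R_0$, then $\Delta r$ would be smooth and locally bounded along $\gamma|_{[R_0/2,\, R_0]}\subset D_o$. But $R_0$ is the first zero of $g$, so $g'(R_0)<0$ (otherwise $g$ would vanish identically by a standard ODE argument applied to \eqref{oohh2}), whence $h(t)\to-\infty$ as $t\to R_0^-$. The Riccati comparison of the previous step would then drive $\Delta r(\gamma(t))\to-\infty$, a contradiction. Finally, I would extend \eqref{lasop2} to a distributional inequality on all of $M$ by the classical cut-locus argument: testing against a nonnegative $\vp\in C^\infty_c(M)$, approximating $D_o$ from within by smooth domains, and observing that the boundary contributions at $\mathrm{cut}(o)$ have the correct sign because $r$ is a distributional supersolution of $\Delta r=(m-1)g'(r)/g(r)$ across the cut locus.

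The principal technical obstacle will be the Riccati comparison step: both $u$ and $h$ carry a non-integrable $1/t$ singularity at the origin, so the argument must be executed with a precisely chosen integrating factor, and the leading singularities of $u$ and $h$ must be verified to cancel when passing to the limit $\eps\to 0^+$. The weak extension across the cut locus, though subtle, is by now classical and should follow a standard template.
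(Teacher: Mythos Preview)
Your approach is correct and aligns closely with what the paper does. The paper cites this as a known result (Theorem~2.4 of \cite{prs}) and only sketches the inclusion $D_o\subset B_{R_0}$: assuming \eqref{lasop2}, it integrates in normal coordinates to obtain $\sqrt{\tilde g(r,\theta)}\le g(r)^{m-1}$, and then positivity of $\tilde g$ on $D_o$ forces $R_0\ge c(\theta)$. This is equivalent to your contradiction argument, since $\Delta r=\partial_r\log\sqrt{\tilde g}$: your blow-up $u\to-\infty$ is the differentiated version of the paper's $\sqrt{\tilde g}\to 0$. The Riccati derivation via Bochner that you outline is precisely what the paper carries out later, around \eqref{dodici}--\eqref{tredici}, in its analytic alternative proof of Galloway's theorem.

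One small point deserves care. You justify $g'(R_0)<0$ by appealing to ODE uniqueness, but \eqref{oohh2} is a differential \emph{inequality}, so the standard uniqueness argument does not apply directly, and in principle $g'(R_0)=0$ is not excluded. The fix is easy: since $\log g(t)\to-\infty$ as $t\to R_0^-$ and $h=(\log g)'$, the function $h$ cannot be bounded below on any left neighbourhood of $R_0$; combined with $u\le h$ and the continuity of $u$ on $[R_0/2,R_0]\subset[0,r(x)]$, this already gives the contradiction. The paper's volume-element formulation sidesteps this issue entirely, which is a minor advantage of that presentation.
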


Suppose the validity of (\ref{uno}) so that $G(t)=-B^2$. A simple checking shows that $g(t)=B^{-1} \sin(Bt)$ solves (\ref{oohh2}). Its first positive zero is at $2\pi/B$. Then \eqref{dentroBR} gives  that $\overline{D_o} \equiv M$ is bounded. Since $M$ is closed, the Hopf-Rinow theorem implies that $M$ is compact. In fact, we have also shown that $\diam(M) \le 2\pi/B$, but since \eqref{uno} is indipendent of the origin $o$ we can improve the above to the sharp estimate $\diam(M) \le \pi/B$. \par 
Cleary the key point of our proof lies in the validity of the inclusion $D_o \subset B_{R_o}$. The way to prove this latter is as follows. Suppose to have shown (\ref{lasop2}) on $D_o \cap B_{R_o}$

A computation in normal coordinates gives
$$
\Delta r = \frac{\partial}{\partial r} \log \sqrt{\tilde g(r,\theta)},
$$
where $\tilde g(r,\theta)$ is the determinant of the metric in this
coordinate system. Thus, \eqref{lasop2} on $D_o\cap B_{R_0}$
reads
\begin{equation}\label{cinque}
\frac{\partial}{\partial r} \log \sqrt{\tilde g(r,\theta)} \le
(m-1)\frac{g'(r)}{g(r)}.
\end{equation}
Fix the unit vector $\theta$ and let $\gamma_\theta$ be the unit speed geodesic emanating from $o$ with $\dot{\gamma}_\theta(o)=\theta$. $\gamma_\theta$ will stop to be minimizing after the first cut point attained at $t=c(\theta)>0$. With $\epsilon >0$ sufficiently small, we integrate (\ref{cinque}) on $[\epsilon,{\rm min}\{c(\theta),R_o \}]$, we let $\epsilon \to 0^+$  and we use the asymptotic behaviours in $0$ to get
$$
\sqrt{\tilde g(r,\theta)}\le g(r)^{m-1},
$$
Since $\tilde g(r,\theta)>0$ on
$D_o$, we have $R_0\ge c(\theta)$, that is, $D_o\subset
B_{R_0}$. \par 
However, by a result of M. Morse, a complete manifold $M$ is compact if and only if each unit speed geodesic $\gamma_\theta$ emanating from some fixed origin $o$ ceases to be a segment i.e. length minimizing, for a value $c(t_o)$ of its parameter $t$ which is finite. Thus, the above reasoning appears to be slightly redundant, in the sense that it provides a bound $R_0$
which is independent of the considered unit speed geodesic from $o$. This motivates the following result of Galloway \cite{galloway}.

\begin{Theorem}\label{teo_galloway}
Let $(M,\metric)$ be a complete Riemannian manifold of dimension
$m\ge 2$. Assume that, for some origin $o$ and for every unit
speed geodesic $\gamma: \R^+_0\ra M$ emanating from $o$, the
solution $g$ of
\begin{equation}\label{eq_galloway}
\left\{\begin{array}{l} g'' + \dfrac{\mathrm{Ricc}(\gamma',
\gamma')(t)}{m-1}g = 0,
\\[0,4cm]
g(0)=0, \quad g'(0)=1
\end{array}\right.
\end{equation}
has a first positive zero. Then, $M$ is compact with finite fundamental
group.
\end{Theorem}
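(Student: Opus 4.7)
\bigskip

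\noindent\textbf{Proof proposal.} The plan is to combine, \emph{direction by direction}, the Riccati-type comparison implicit in Theorem \ref{teo_laplacianodasopra} with Morse's characterization of compactness recalled above the statement. The point is that, although Theorem \ref{teo_laplacianodasopra} is formulated with a fixed function $G\in C^0(\R^+_0)$ controlling the radial Ricci curvature everywhere on $M$, its proof is essentially one-dimensional: inequality \eqref{cinque} and the subsequent integration only use the radial Ricci along the \emph{single} geodesic $\gamma_\theta$.

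Fix a unit vector $\theta\in T_oM$ and set
$$
G_\theta(t)=-\frac{\mathrm{Ricc}(\gamma'_\theta,\gamma'_\theta)(t)}{m-1}.
$$
Then the solution $g_\theta$ of \eqref{eq_galloway} solves $g_\theta''-G_\theta g_\theta=0$ with $g_\theta(0)=0$, $g_\theta'(0)=1$ and, by hypothesis, admits a first positive zero at some $R_0(\theta)<+\infty$. Rerunning the chain of estimates in the proof of Theorem \ref{teo_laplacianodasopra} along the single direction $\theta$ yields
$$
\sqrt{\tilde g(r,\theta)}\le g_\theta(r)^{m-1} \qquad \text{for } 0<r<\min\{c(\theta),R_0(\theta)\}.
$$
Since $\tilde g(r,\theta)>0$ on $(0,c(\theta))$ while $g_\theta(R_0(\theta))=0$, we conclude $c(\theta)\le R_0(\theta)<+\infty$. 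Thus every unit speed geodesic issuing from $o$ ceases to be length-minimizing after a finite time, and Morse's criterion mentioned above implies that $M$ is compact.

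For the finite fundamental group, consider the Riemannian universal covering $\pi:\tilde M\to M$ and choose a lift $\tilde o\in\pi^{-1}(o)$. Because $\pi$ is a local isometry, geodesics from $\tilde o$ project to geodesics from $o$ and the Ricci tensor of $\tilde M$ is the pull-back of that of $M$; in particular, for every unit $\tilde\theta\in T_{\tilde o}\tilde M$ the ODE \eqref{eq_galloway} along $\gamma_{\tilde\theta}$ coincides with the one along $\pi\circ\gamma_{\tilde\theta}$ in $M$ and so still has a first positive zero. Hence the first part of the argument, applied at $\tilde o$ in $\tilde M$, shows that $\tilde M$ is compact, so $\pi$ has finitely many sheets and $\pi_1(M)$ is finite.

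The main step to justify carefully is the very first one: that the argument of Theorem \ref{teo_laplacianodasopra} localizes to a single radial direction so that $G$ may depend on $\theta$. Once this localization is made explicit, the rest of the proof is essentially bookkeeping plus the standard lifting argument for the universal cover.
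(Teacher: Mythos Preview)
Your argument is correct, and it is essentially the direction-by-direction localization of the volume comparison that the paper sketches \emph{before} stating the theorem (inequality \eqref{cinque} and the line $\sqrt{\tilde g(r,\theta)}\le g(r)^{m-1}$). The paper's own proof, however, takes a different route: it multiplies \eqref{eq_galloway} by $g$ and integrates by parts to obtain the variational identity \eqref{sette}, then interprets $L=\di^2/\di t^2+\Ricc(\gamma',\gamma')/(m-1)$ as the stability operator of $\gamma$ and uses monotonicity of its first Dirichlet eigenvalue to conclude $c(\gamma)\le r_0$. The paper also records a second, ``analytic'' proof that rederives the Riccati inequality from scratch and ends by producing the index-form inequality \eqref{quindici}, which is then played off against \eqref{sette}. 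Your approach bypasses the index form entirely: once one accepts that the Laplacian/Jacobian comparison in Theorem \ref{teo_laplacianodasopra} is a one-dimensional argument along each $\gamma_\theta$ (which it is, since the underlying Riccati equation for $\Delta r\circ\gamma_\theta$ only sees $\Ricc(\gamma_\theta',\gamma_\theta')$), the conclusion $c(\theta)\le R_0(\theta)$ drops out immediately. The trade-off is that the paper's stability-operator proof is self-contained and highlights the link with the second variation of arclength, whereas yours is shorter but leans on the localization of Theorem \ref{teo_laplacianodasopra}, which you correctly flag as the point needing justification. The covering argument for $\pi_1(M)$ is the same in both.
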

\begin{proof}
Let $r_0 >0$ be the first positive zero of $g$ solution of (\ref{eq_galloway}). Multiply the equation in (\ref{eq_galloway}) by $g$, integrate by parts and use the initial conditions to get 
\begin{equation} \label{sette}
\int_0^{r_0} (g')^2-\int_0^{r_0} \frac{\rm{Ricc}(\dot{\gamma},\dot{\gamma})}{m-1} g^2 =0
\end{equation}
By Rayleigh characterization, this means that the operator 
$$
L=\frac{\di^2}{\di t^2}+\frac{\rm{Ricc}(\dot{\gamma},\dot{\gamma})}{m-1}
$$
satisfies 
$$
\lambda_1^L ([0,r_0]) \le 0,
$$
and by monotonicity of eigenvalue
$$
\lambda_1^L ([0,r])<0 \qquad \forall \ r > r_0.
$$
But $L$ is the stability operator for the geodesic $\gamma$, and on $[0,T]$ $\gamma$ is minimizing only if
$$
\lambda_1^L([0,T]) \ge 0.
$$
Thus if the value $c(\gamma)$ gives the cut-point di $o$ along $\gamma$ it must be $c(\gamma) \le r_0$. By Morse result $M$ is compact. The same procedure can also be applied to the Riemannian universal covering $\widetilde{M} \ra M$, showing that $\widetilde{M}$ is compact and thus that $\Pi_1(M)$ is finite. 
\end{proof}
If we ignore that $L$ is the stability operator for the unit speed geodesic $\gamma$ we can proceed with the following analytic alternative proof. \par 
Let $p \in D_o$, and let $\gamma : [0,r(p)] \ra M $ be the minimizing geodesic from $o$ to $p$ so that $r(\gamma(t))=t$ and $\nabla r \circ \gamma =\dot\gamma$ for $t \in [0,r(p)]$. We fix a local orthonormal coframe $\{\theta^i\}$ to perform computations.  Here  $1\le i,j, \ldots\le m$ and we use Einstein summation convention. Then for the distance function $r$ on $D_o$ we have
$$
\di r=r_i\theta^i,
$$
and Gauss lemma writes
\begin{equation}\label{gauss}
r_ir_i\equiv 1.
\end{equation}
Taking covariant derivative of (\ref{gauss}) we obtain
\begin{equation}\label{hessianosugradr}
r_{ij}r_i = 0  
\end{equation}
that is,
\begin{equation}\label{dieci}
\Hess r(\nabla r,
\cdot)=0.
\end{equation}
Covariant differentiation of \eqref{hessianosugradr} yields
\begin{equation} \label{undici}
r_{ijk}r_i + r_{ij} r_{ik} =0.
\end{equation}
From the simmetry $r_{ij}=r_{ji}$ we deduce that $r_{ijk}=r_{jik}$, and by the Ricci commutation rules
$$
r_{ijk}=r_{ikj} + r_t R_{tijk}  
$$
$R_{tijk}$ the components of the Riemann tensor. Using this in (\ref{undici}) we get
$$
0= r_{ijk}r_i + r_{ij} r_{ik} = r_{jik}r_i + r_{ij} r_{ik} =
r_{jki}r_i + r_tR^t_{jik}r_i + r_{ij}r_{ik}.
$$
Thus, tracing with respect to $j$ and $k$
$$
r_ir_{kki}+r_tr_i R_{ti} +r_{ik}r_{ik}=0,
$$
with $R_{ti}$ the components of the Ricci tensor. In other words
$$
\left<\nabla\Delta r, \nabla r \right>+{\rm Ricc} \left(\nabla r, \nabla r \right)+ \left| {\rm  Hess} (r) \right|^2 =0
$$
Computing along $\gamma$
$$
\frac{\di}{\di t} \left(\Delta r \circ \gamma \right)+ \left| {\rm  Hess} (r) \right|^2+ {\rm Ricc} \left(\nabla r, \nabla r \right)=0
$$
on $[0,r(p)]$. Using (\ref{dieci}) and Newton's inequality, we have
$$
\left| {\rm Hess}(r)\right|^2 \ge \frac{(\Delta r)^2}{m-1},
$$
and setting $\varphi(t)=\Delta r \circ \gamma(t)$ from the above we obtain
\begin{equation}\label{dodici}
\frac{\di}{\di t} \varphi(t)+ \frac{\varphi(t)^2}{m-1}+ \Ricc \left(\nabla r, \nabla r \right) \le 0
\end{equation}
on $[0,r(p)]$. Furthermore, it is well known that
$$
\Delta r = \frac{m-1}{r} + o(1) \qquad {\rm as}\ r \to 0^+
$$
Hence, since $\gamma$ is minimizing
\begin{equation}\label{tredici}
\frac{1}{m-1} \varphi (t) =\frac{1}{(r \circ \gamma)(t)}+o(1)=\frac{1}{t} +o(1) \qquad {\rm as} \ t \to 0^+
\end{equation}
Defining 
$$
u(t)=t \exp\left\{\int_0^t \left(\frac{\varphi(s)}{m-1}-\frac{1}{s} \right) \di s\right\}
$$
on $[0,r(p)]$, $u$ is well defined because of \eqref{tredici} and a computation using (\ref{dodici}) gives
\begin{equation}\label{quattordici}
\frac{\di^2}{\di t^2} u+  \frac{{\rm Ricc}(\dot{\gamma},\dot{\gamma})}{m-1} u \le 0
\end{equation}

Let now $h$ be any $C^1 ([0,r(p)])$ function such that $h(0)=0=h(r(p))$. Since $u>0$ on $(0,r(p)]$ the function $h^2 u'/u$ is well defined on $(0,r(p)]$. Differentiating, using (\ref{quattordici}) and Young inequality we get
$$
\frac{\di}{\di t}\left( h^2 \frac{u'}{u} \right) \le -\frac{{\rm Ricc(\dot{\gamma},\dot{\gamma})}}{m-1} h^2-h^2\left( \frac{u'}{u} \right)^2+2hh'\frac{u'}{u} \le -\frac{{\rm Ricc(\dot{\gamma},\dot{\gamma})}}{m-1} h^2+(h')^2
$$
Fix $\epsilon >0$ sufficiently small. Integration of the above on $[\epsilon,r(p)]$ gives
$$
-h^2(\epsilon) \frac{u'(\epsilon)}{u(\epsilon)} \le \int_\epsilon^{r(p)} (h')^2- \int_\epsilon^{r(p)} \frac{{\rm Ricc(\dot{\gamma},\dot{\gamma})}}{m-1} h^2
$$
Since $h(\epsilon)=A\epsilon+o(1)$, for $\epsilon \to 0^+$ where $A \in \R$, letting $\epsilon \to 0^+$ we obtain
\begin{equation}\label{quindici}
\int_0^{r(p)} (h')^2- \int_0^{r(p)} \frac{{\rm Ricc(\dot{\gamma},\dot{\gamma})}}{m-1} h^2 \ge 0
\end{equation}
This contradicts (\ref{sette}) unless $r(p) \le r_0$. \par 
Thus we have reduced the compactness problem for the complete manifold $M$ to the problem of the existence of a first zero for solutions of the Cauchy problem
\begin{equation}\tag{CP}\label{sedici}
\left\{ \begin{array}{l}
g'' +K(t) g=0 \qquad \text{on } \R^+\\[0.2cm]
g(0)=0, \quad g'(0)=1.
\end{array}\right.
\end{equation}
where in our geometric application
\begin{equation}\label{diciasette}
K(t)=K_\gamma (t)=\frac{{\rm Ricc(\dot{\gamma},\dot{\gamma})}}{m-1} (t)
\end{equation}
We observe that the existence of a first zero is also "a posteriori" guaranteed via an oscillation result for the same equation, and that uniform upper estimate for the positioning of the first zero yields a diameter estimate. In this perspective the original result of Calabi can be stated as follows (see also Theorem 3.11 of \cite{bmr2}).

\begin{Theorem}[\textbf{Theorems 1 and
2 of \cite{calabi}}]\label{ilverocalabi}
Let $M$ be as above, and assume that $\Ricc \ge 0$ on $M$. Suppose that for each unit
speed geodesic $\gamma$ emanating from $o$ there exist $0<a<b$, possibly depending on $\gamma$, such that
\begin{equation}\label{calabialfinito}
\int_a^b \sqrt{\frac{\mathrm{Ricc}(\gamma',\gamma')(s)}{m-1}}\di s > \left\{ \left(1 +\frac12 \log\frac{b}{a}\right)^2 -1\right\}^{1/2}.
\end{equation}
Then, $M$ is compact and has finite fundamental group. In particular, this holds provided that
\begin{equation}\label{laprimacalabi}
\limsup_{t\ra +\infty} \left(\int_1^t \sqrt{\frac{\mathrm{Ricc}(\gamma',\gamma')(s)}{m-1}}\di
s - \frac{1}{2} \log t\right) = +\infty.
\end{equation}
\end{Theorem}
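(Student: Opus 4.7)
The plan is to invoke Theorem \ref{teo_galloway}, thereby reducing the geometric statement to the analytic one: for every unit-speed geodesic $\gamma$ emanating from $o$, the solution $g$ of \eqref{sedici} with
$$
K(t)=\frac{\Ricc(\dot\gamma,\dot\gamma)(t)}{m-1}\ge 0
$$
must have a first positive zero. I would argue by contradiction, assuming $g>0$ on the whole of $\R^+$. Since $g''=-Kg\le 0$, the function $g$ is concave; together with $g(0)=0$, $g'(0)=1$, this forces $g'\ge 0$ on $\R^+$ (otherwise, by concavity, $g$ would drop below zero in finite time) and makes $g(t)/t$ non-increasing. Setting $w=g'/g$, the Riccati identity $w'+w^2+K=0$ yields $(1/w)'=1+K/w^2\ge 1$, and comparing at $0^+$ where $w(t)\sim 1/t$ gives the a priori estimate $w(t)\le 1/t$. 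Consequently $p(t):=tw(t)=tg'(t)/g(t)$ takes values in $[0,1]$.

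The heart of the argument is the exact identity
$$
\int_a^b K(t)\,t\,\di t = p(a)-p(b)+\int_a^b \frac{p(1-p)}{t}\,\di t.
$$
To derive it, I would introduce the auxiliary function $F(t)=g(t)-tg'(t)$, which satisfies $F'=tKg$, $F(0)=0$, $F\ge 0$, then write $\int_a^b Kt\,\di t=\int_a^b F'/g\,\di t$ and integrate by parts, re-expressing every term in $w$ and hence in $p$. From $p\in[0,1]$ one reads off $p(a)-p(b)\le 1$ and $p(1-p)\le 1/4$, so with $L:=\log(b/a)$
$$
\int_a^b K(t)\,t\,\di t \le 1+\frac{L}{4}.
$$

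A single application of Cauchy--Schwarz to the factorization $\sqrt{K}=t^{-1/2}\cdot (Kt)^{1/2}$ now closes the loop:
$$
\left(\int_a^b \sqrt{K}\,\di t\right)^{2} \le \left(\int_a^b \frac{\di t}{t}\right)\left(\int_a^b Kt\,\di t\right) \le L\Bigl(1+\tfrac{L}{4}\Bigr) = \Bigl(1+\tfrac{L}{2}\Bigr)^{2}-1,
$$
contradicting \eqref{calabialfinito}. Hence $g$ must have a first positive zero, and Theorem \ref{teo_galloway} yields compactness of $M$ together with finite fundamental group. The ``in particular'' clause follows from the elementary inequality $\sqrt{(1+\tfrac{1}{2}\log b)^2-1}<\tfrac{1}{2}\log b+1$ valid for every $b>1$: under \eqref{laprimacalabi}, choosing $a=1$ and $b$ large enough that $\int_1^b\sqrt{K}-\tfrac{1}{2}\log b>1$ immediately yields \eqref{calabialfinito}.

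The main delicate step is the sharp identity for $\int_a^b Kt\,\di t$; the reason the Calabi constants come out exactly right is precisely that $p$ is \emph{bounded} in $[0,1]$, which in turn rests on the concavity of $g$ (providing $g'\ge 0$) and on the elementary Riccati comparison $w\le 1/t$. Once those are in place, the remaining estimates are just $p(1-p)\le 1/4$ and one Cauchy--Schwarz, both robust and insensitive to the geometric origin of $K$.
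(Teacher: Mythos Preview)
Your argument is correct. The Riccati substitution $w=g'/g$, the identity
\[
\int_a^b K(t)\,t\,\di t \;=\; p(a)-p(b)+\int_a^b \frac{p(1-p)}{t}\,\di t,\qquad p=tw,
\]
and the single Cauchy--Schwarz step all check out, and the constants match exactly because $L(1+L/4)=(1+L/2)^2-1$. The edge case where $g'$ hits zero (forcing $K\equiv 0$ thereafter) is harmless since then $p=0$ and the bounds $0\le p\le 1$, $p(1-p)\le 1/4$ remain valid.

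This is, however, a genuinely different route from the one the paper takes. The paper does not give a self-contained proof of \eqref{calabialfinito}; it cites Calabi for that. What the paper \emph{does} prove is the oscillatory clause \eqref{laprimacalabi}, and it does so by the change of variables \eqref{tr}--\eqref{zr}: choosing any auxiliary weight $v$ with $v^{-1}\in L^1(+\infty)$, setting $t(r)=\big(\int_r^\infty v^{-1}\big)^{-1}$ and $z(r)=g(t(r))/t(r)$, one lands on a weighted equation $(vz')'+Avz=0$ with $A\ge 0$, to which the critical-curve criterion (Theorem~\ref{main}\,(4), equivalently \eqref{nuova}) is applied; unwinding the change of variables turns \eqref{nuova} into \eqref{laprimacalabi}. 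Your approach is more elementary and yields the sharp finite-interval condition \eqref{calabialfinito} directly; the paper's approach buys generality, since the same critical-curve machinery is what later handles sign-changing potentials in Theorems~\ref{compafinito} and~\ref{teo_calabi}.
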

\begin{Remark}
\emph{As a matter of fact, under the assumption $\Ricc\ge 0$ on $M$, \eqref{laprimacalabi} gives an oscillation result for \eqref{sedici}.}
\end{Remark}
In Calabi result the requirement $\Ricc \ge 0$ is essential. We stress that \eqref{calabialfinito} is, to the best of our knowledge, the first instance of a condition in finite form for the existence of a first zero, that
is, a condition involving the potential $K$ only in a compact
interval $[a,b]$.
One of the main purpose of the present paper is to extend the result even when Ricci is negative somewhere. It shall be observed that the problem of obtaining Myers type compactness theorems under the presence of a suitably small amount of negative Ricci curvature has already been a flourishing field of research, for which we refer the reader to  \cite{jywu}, \cite{elworthyrosenberg}, \cite{rosenbergyang} and the references therein. However, the techniques employed in these papers are of various nature and neither of them relies on oscillation type results for a linear ODE, nor it gives explicit bounds for the amount of negative curvature allowed. Indeed, it should be pointed out that the method in \cite{jywu} via Jacobi fields is not distant from our approach. A much closely related result is the recent  \cite{gionamichipaolo}, where the case $\mathrm{Ricc} \ge -B^2$ is analyzed.
 
\section{The role of the critical curve}
As we will see shortly, in order to extend Calabi result, we shall deal with a slightly different ODE. In particular, we are concerned with the following problems:
\begin{itemize}
\item[i)] study the existence of a first zero of solutions $z(r)$ of
\begin{equation}\label{cauchypr}
\left\{ \begin{array}{l} (v(r)z'(r))' + A(r)v(r) z(r) = 0 \qquad
\text{on } \  \R^+ \\[0.2cm]
z(0^+) = z_0>0,  
\end{array}\right.
\end{equation}
with $A(t) \ge 0$, $v(t) >0$ on $\R^+$;
\item[ii)] give an upper bound for the positioning of the first zero of $z$;
\item[iii)] study the oscillatory behavior of \eqref{cauchypr};
\item[iv)] extend the obtained result when $A(r)$ changes sign.
\end{itemize}
Towards these aims we introduce the "critical curve" $\chi (r)$ relative to \eqref{cauchypr}
or to the next Cauchy problem
\begin{equation}\label{cauchyprinf}
\left\{ \begin{array}{l} (v(r)z'(r))' + A(r)v(r) z(r) = 0 \qquad
\text{on } \  [r_0,+\infty), \quad r_0>0\\[0.2cm]
z(r_0^+) = z_0 \in \R,  
\end{array}\right.
\end{equation}
To do this we require the assumptions
 \begin{equation}\label{sesto}\tag{V1}
0 \le v(r) \in L^\infty_{\mathrm{loc}}(\R_0^+), \qquad
\frac{1}{v(r)} \in L^\infty_{\mathrm{loc}}(\R^+), \qquad
\lim_{r\ra 0^+} v(r) =0
 \end{equation}
(the last equation request is intended on a rapresentative of $v$) and the integrability condition
\begin{equation}\label{VL1} \tag{V$_{\text{L1}}$}
\frac{1}{v(r)} \in L^1(+\infty).
\end{equation}
We set
\begin{equation}\label{defchi}
\chi(r) = \left\{2v(r) \int_r^{+\infty} \frac{\di
s}{v(s)}\right\}^{-2} = \left\{\left(-\dfrac 12
\log\int_r^{+\infty}\dfrac{\di s}{v(s)}\right)'\right\}^2 
\end{equation}
Fix $0<R<r$, from the definition di $\chi$ we deduce

 \begin{equation}\label{notaint}
\int^r_R \sqrt{\chi(s)}\di s = \frac 12 \log
\left\{\left(\int_R^{+\infty} \frac{\di s}{v(s)}\right)\Big/\left(
\int_r^{+\infty} \frac{\di s}{v(s)}\right)\right\} \qquad \forall
\ 0<R<r,
\end{equation}
Thus letting $r\ra +\infty$, we obtain
\begin{equation}\label{chinoninL1}
\sqrt{\chi(r)} \not\in L^1(+\infty)
\end{equation}
It is worth to stress that the function $\chi$ only depends on the weight $v$, not on $A$. Note that, although \eqref{sedici} can be thought as a version of \eqref{cauchypr} with $v\equiv 1$, assumptions \eqref{sesto}, \eqref{VL1} are not satisfied. Thus, the next main Theorem \ref{main} below cannot be directly applied to \eqref{sedici}. 

The study of the Cauchy problem \eqref{cauchypr} turns out to be extremely useful in a number of different geometric problems, not only those described in this paper. For instance, a mainstream application of it is to derive spectral estimates for stationary Schr\"odinger tipe operators via radialization techniques. In this case, the role of $v$ is played by the volume growth of geodesic spheres centered at $o$, for which \eqref{sesto} is the highest regularity that we can in general guarantee. However, since there are natural upper and lower bounds coming from the Laplacian comparison theorems, it is worth to relate the critical curve with that of, say, an upper bound for $v$. More precisely, for $f$ satisfying 
\begin{equation}\tag{F1} \label{F1}
\disp f \in L^\infty_{\rm loc} (\R_0^+), \qquad \frac{1}{f} \in L^\infty_{\rm loc} (\R^+), \qquad 0 \le v \le f \quad {\rm on} \ \R_0^+ 
\end{equation} 
\begin{equation}\tag{F$_{\text{L1}}$}\label{FL1}
\frac 1 f \in L^1(+\infty)
\end{equation}
we shall compare $\chi(r)$ with the critical curve $\chi_f (r)$ defined again via \eqref{defchi}. We observe that, for any positive constant $c$, $\chi_{cf}=\chi_{f}$. This suggests that, in general, $v \le f$ does not imply $\chi \le \chi_f$. To recover this property we need a more stringent relation between $v$ and $f$.

\begin{Proposition}[\textbf{Proposition 4.13 of \cite{bmr2}}]\label{confront}
Let $v,f$ satisfy \eqref{sesto}, \eqref{VL1} on some  interval $I=
(r_0,+\infty)\subset \R^+$. Then,
\begin{itemize}
\item[(i)] If $v/f$ is non-increasing on $I$, $\chi(r)\le
\chi_f(r)$ on $I$;
\item[(ii)] If $v/f$ is non-decreasing on $I$, $\chi(r)\ge
\chi_f(r)$ on $I$;
\end{itemize}
\end{Proposition}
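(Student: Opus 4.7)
The plan is to reduce both statements to a single monotone comparison of the quantity $v(r) I_v(r)$ against $f(r) I_f(r)$, where I abbreviate $I_v(r) = \int_r^{+\infty} \di s/v(s)$ and $I_f(r) = \int_r^{+\infty} \di s/f(s)$. From the definition \eqref{defchi}, $\chi(r) = [2v(r) I_v(r)]^{-2}$ and $\chi_f(r) = [2f(r) I_f(r)]^{-2}$; hence \emph{(i)} is equivalent to showing $v(r) I_v(r) \ge f(r) I_f(r)$ on $I$ under the assumption that $v/f$ is non-increasing, and \emph{(ii)} reduces to the opposite inequality under the opposite monotonicity.

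The key observation is to rewrite the integrand as
\[
\frac{1}{v(s)} = \frac{f(s)}{v(s)} \cdot \frac{1}{f(s)}.
\]
If $v/f$ is non-increasing on $I$ then $f(s)/v(s) \ge f(r)/v(r)$ for every $s \ge r$ in $I$, and integrating from $r$ to $+\infty$ yields
\[
I_v(r) \ge \frac{f(r)}{v(r)} I_f(r),
\]
which, after multiplication by $v(r) > 0$, is exactly $v(r) I_v(r) \ge f(r) I_f(r)$. Hence $\chi(r) \le \chi_f(r)$ on $I$, proving \emph{(i)}. The proof of \emph{(ii)} is obtained verbatim by flipping the direction of the pointwise inequality on $f/v$, which is legitimate because the monotonicity of the ratio is reversed.

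The hypotheses \eqref{sesto}, \eqref{VL1} imposed on both $v$ and $f$ ensure that the two functions are locally bounded and locally bounded away from zero on $I$, so that the ratio $v/f$ is well-defined a.e.\ and both integrals $I_v(r), I_f(r)$ are finite and strictly positive for every $r \in I$; consequently the pointwise comparison under the integral sign is justified. I do not anticipate any substantive obstacle: the argument is essentially a one-line mean-value-type principle exploiting that a monotone ratio can be factored out of an integral against a pointwise bound. The only delicate point is conceptual, and is already highlighted in the excerpt before the statement, namely that the cruder assumption $v \le f$ does not suffice (since $\chi_f$ is invariant under the rescaling $f \mapsto cf$), so the stronger hypothesis on the monotonicity of the ratio, rather than on $v$ and $f$ separately, is indispensable.
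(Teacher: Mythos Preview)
Your argument is correct. The reduction of the inequality $\chi(r)\le \chi_f(r)$ to $v(r)I_v(r)\ge f(r)I_f(r)$ is immediate from the definition \eqref{defchi}, and your key step---writing $1/v(s)=(f(s)/v(s))\cdot(1/f(s))$ and using that $f/v$ is non-decreasing on $[r,+\infty)$ to pull out the factor $f(r)/v(r)$---is a clean and valid monotone comparison. The hypotheses \eqref{sesto}, \eqref{VL1} on $I$ indeed guarantee that all quantities involved are finite, positive, and that the ratio $v/f$ makes sense a.e.\ on $I$, so the integral manipulation is justified.

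Note that the paper itself does not supply a proof of this proposition: it is quoted as Proposition~4.13 of \cite{bmr2} and used as a black box. There is therefore nothing in the present paper to compare your argument against. Your proof is essentially the natural one, and it is hard to imagine the proof in \cite{bmr2} being substantially different, since the statement is a direct pointwise comparison under the integral sign once one unwinds the definition of $\chi$.
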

In the case $v(r) =\vol(\partial B_r)$, the above proposition fits well with the Bishop-Gromov comparison theorem for volumes (\cite{prs}, Theorem 2.14). The interested reader may consult Chapter 4 of \cite{bmr2}, where the authors give a detailed discussion on the critical curve, together with estimates on $\chi$ when $v(r)=\vol(\partial B_r)$, explicit examples, and many applications. For instance, the deep relationship between $\chi(r)$ and optimal weights for Hardy inequalities is discussed. Since, as we will see, in dealing with Calabi-type compactness results the role of $v$ will be played by some suitable weight which has no direct relation with volumes, we shall not pursue this line of argument any further.

We now list the assumptions under which  we will treat either of the Cauchy problems \eqref{cauchypr} or \eqref{cauchyprinf}. 
\begin{equation}\tag{V2} \label{V2}
v(r) \int_r^a \frac{\di s}{v(s)}; \qquad \frac{1}{v(r)} \int_o^r v(s) \di s
\ \in L^\infty ([0,a])
\end{equation}
for some $a \in \R^+$.
\begin{equation} \tag{V3} \label{V3}
\frac{1}{v(r)} \int_0^r v(s) \di s = o(1) \qquad {\rm as} \quad r \to 0^+
\end{equation}  
\begin{equation}\tag{A1}\label{A1}
A(r) \in L^\infty_{\rm loc} (\R_0^+)
\end{equation}
Conditions
\begin{itemize}
\item[1.] (\ref{A1}), (\ref{sesto}), (\ref{V2}) and (\ref{V3}) guarantee the existence of a solution $z \in \lip_\loc(\R^+_0)$ of (\ref{cauchypr})
\item[2.] (\ref{A1}), (\ref{sesto}) and (\ref{V2}) its uniqueness
\item[3.] (\ref{A1}), (\ref{sesto}) the fact that each solution $z \not\equiv 0$ has isolated zeros, if any.
\end{itemize}
Note that \eqref{V2} and \eqref{V3} are automatically satisfied if $v(r)$ is non-decreasing in a neighbourhood of $0$.\\ 
The following theorem summarizes some of the results obtained in \cite{bmr}. 
\begin{Theorem}\label{main}
Let \eqref{A1}, \eqref{sesto}, \eqref{F1}, \eqref{VL1} be met, and let $z \in \lip_\loc(\R_0^+)$ be a solution of
\begin{equation}\label{cauchy2}
\left\{ \begin{array}{l} (v(r)z'(r))' + A(r)v(r) z(r) = 0 \qquad
\text{on } \ \R^+,\\[0.2cm]
z(0^+) = z_0>0.
\end{array}\right.
\end{equation}
Then,
\begin{itemize}
\item[(1)] [\emph{\textbf{Theorem 5.2 of \cite{bmr}}}] If $A(r)\le \chi(r)$ on $\R^+$, then $z>0$ on $\R^+$. Furthermore, there exists $r_1 >0$ and a constant $C=C(r_1)>0$ such that 
\begin{equation}\label{lowerbound}
z(r) \ge \disp -C \sqrt{\int_r^{+\infty} \dfrac{\di
s}{f(s)}}\log \int_r^{+\infty} \dfrac{\di s}{f(s)} \qquad \text{on } [r_1,+\infty).
\end{equation}
\item[(2)] [\emph{\textbf{Corollary 5.4 of \cite{bmr}}}] If $A(r) \le \chi(r)$ on $[r_0,+\infty)$, for some $r_0>0$, then $z$ is nonoscillatory, that is, it has only finitely many zeroes (if any).
\item[(3)] [\emph{\textbf{Corollary 6.3 of \cite{bmr}}}] If $A \ge 0$ on $\R^+$, $A\not\equiv 0$ and there exist $r>R>0$ such that $A\not \equiv 0$ on $[0,R]$ and 
\begin{equation}\label{contrad}
\begin{array}{l}
\disp \int_{R}^r \left(\sqrt{A(s)}-\sqrt{\chi_{f}(s)}\right)\di
s > -\dfrac{1}{2} \left( \log \int_0^{R} A(s)v(s)\di s + \log
\int_{R}^{+\infty} \frac{\di s}{f(s)} \right)
\end{array}
\end{equation}
then $z$ has
a first zero. Moreover, this is attained on $(0,\overline{R}]$,
where $\overline{R}>0$ is the unique real number satisfying
\begin{equation}\label{prim}
\int^{r}_{R} {\sqrt{A(s)} \di s} = -\frac{1}{2} \log
\int_0^{R} A(s)v(s)\di s -\frac{1}{2} \log
\int_{r}^{\overline{R}} {\frac{\di s}{f(s)}}
\end{equation}
\item[(4)] [\emph{\textbf{Theorem 6.6 of \cite{bmr}}}] If $A\ge 0$ on $\R^+$ and, for some (hence any) $R>0$ such that $A\not\equiv 0$ on $[0,R]$, 
\begin{equation}\label{condforte}
\limsup_{r\rightarrow +\infty} \int_R^r
\left(\sqrt{A(s)}-\sqrt{\chi_f(s)}\right) \di s = +\infty
\end{equation}
then $z$ is oscillatory, that is, it has infinitely many zeroes.
\end{itemize}
\end{Theorem}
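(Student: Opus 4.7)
All four statements rest on the same mechanism: the critical curve is designed so that the model equation $(v\hat z')' + \chi v\hat z = 0$ admits an explicit fundamental system. Setting $F(r) = \int_r^{+\infty}\di s/v(s)$, one checks directly that $\chi = 1/(4v^2 F^2)$ and that
\[
\hat z_1(r) = \sqrt{F(r)}, \qquad \hat z_2(r) = -\sqrt{F(r)}\,\log F(r)
\]
form a fundamental system, both positive on $\R^+$, with $\hat z_1\to 0$ and $\hat z_2 \to +\infty$ at infinity. It is then convenient to pass to the Riccati variable $w = vz'/z$, defined wherever $z\neq 0$, which solves $w' = -Av - w^2/v$; the model counterpart is $\hat w_1 = v\hat z_1'/\hat z_1 = -1/(2F)$.

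For (1) I would argue by contradiction: if $z$ has a first zero at $r^\ast$, then $w(r)\to -\infty$ as $r\nearrow r^\ast$. The difference $\psi = w-\hat w_1$ satisfies
\[
\psi' = -(A-\chi)v - \frac{w+\hat w_1}{v}\,\psi,
\]
and under $A\le \chi$ this linear differential inequality of Gronwall type prevents $\psi$ from becoming negative, once the initial asymptotics of $w$ and $\hat w_1$ at $0^+$ (pinned by \eqref{V2}, \eqref{V3}) are matched. Hence $w\ge \hat w_1$ on $(0,r^\ast)$, contradicting $w(r^\ast) = -\infty$. The lower bound \eqref{lowerbound} then follows by integrating $w\ge \hat w_1$ to get $z \ge c\sqrt F$, and sharpening via the Wronskian with $\hat z_2$ and the domination $v\le f$ of \eqref{F1}, \eqref{FL1}. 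Part (2) is the same Sturm-type comparison restricted to $[r_0,+\infty)$; it precludes accumulation of zeros at infinity, and since non-trivial solutions have isolated zeros (item 3 of the list preceding the theorem), their number is finite.

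For (3), by contradiction assume $z>0$ on $(0,\overline R]$, so $w$ is defined there. Integrating $w' = -Av-w^2/v$ on $[0,R]$ and using $A\ge 0$, $A\not\equiv 0$ together with the initial asymptotics of $w$, one obtains a bound of the form $w(R)\le -C/\int_0^R Av$, which produces the first logarithmic term in \eqref{contrad}. On $[R,\overline R]$ a Liouville-type substitution $z = \hat z \cdot u$ adapted to the $f$-critical curve, combined with the AM--GM inequality $Av+w^2/v\ge 2|w|\sqrt A$ (valid once $w<0$), converts the Riccati bound into an integral estimate whose dominant integrand is $\sqrt A - \sqrt{\chi_f}$. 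The defining identity \eqref{prim} of $\overline R$ is exactly calibrated so that the remainders cancel, and \eqref{contrad} then forces $|w|\to +\infty$ at some $r\le \overline R$, i.e.\ $z$ vanishes. Part (4) is a routine iteration of (3): condition \eqref{condforte} allows one to pick successive pairs $R_k<r_k\to +\infty$ on which (3), applied to the tail problem on $[R_k,+\infty)$, produces a further zero.

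The chief technical obstacle throughout is the mismatch between $\chi$ (which depends on $v$ itself) and $\chi_f$ (which depends on the dominating $f$): since $v\le f$ does not imply $\chi\le \chi_f$, one must either invoke Proposition \ref{confront} on subintervals where $v/f$ is monotone, or perform a direct integration-by-parts on the Wronskian of the two families of model solutions in order to translate bounds. The other delicate point is the behaviour at $r = 0^+$, where \eqref{V2} and \eqref{V3} are exactly what is needed to match the initial asymptotics of $w$ on both sides of each of the Gronwall/Riccati comparisons above.
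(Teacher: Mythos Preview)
The paper does not prove Theorem~\ref{main}: it is stated as a summary of results from \cite{bmr}, with each item tagged by the corresponding theorem or corollary there. So there is no ``paper's own proof'' to compare against, only the remarks the authors make immediately after the statement.

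Your overall architecture---explicit model solutions $\sqrt{F}$ and $-\sqrt{F}\log F$ of $(v\hat z')'+\chi v\hat z=0$, passage to the Riccati variable $w=vz'/z$, comparison/Gronwall for (1)--(2), and a blow-up argument for (3)--(4)---is the natural one and is almost certainly what \cite{bmr} does. Two points in your sketch, however, are off.

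First, in (3) your intermediate bound is miswritten. Setting $y=-w=-vz'/z$, the Riccati equation gives $y'=Av+y^2/v\ge Av$, so with $y(0^+)=0$ one gets $y(R)\ge \int_0^R Av$, not $y(R)\ge C/\int_0^R Av$. On $[R,r]$ the AM--GM step $y'\ge 2y\sqrt{A}$ yields $y(r)\ge \big(\int_0^R Av\big)\exp\big(2\int_R^r\sqrt{A}\big)$. On $[r,\overline R]$ one then uses only the crude inequality $y'\ge y^2/v\ge y^2/f$ (from $v\le f$), which forces $y$ to blow up before the point $s$ at which $\int_r^s \di\sigma/f=1/y(r)$. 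Requiring this point to be finite, taking logarithms, and applying \eqref{notaint} produces exactly \eqref{contrad} and the characterization \eqref{prim} of $\overline R$.

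Second, your description of the ``chief technical obstacle'' as the $\chi$ versus $\chi_f$ mismatch, to be resolved via Proposition~\ref{confront}, does not match how $f$ actually enters. The paper explicitly says (in the bullet points after the theorem) that the technique for (3)--(4) is \emph{different} from that for (1)--(2), and that $\chi_f$ appears in \eqref{contrad}, \eqref{condforte} for reasons intrinsic to that technique. As the computation above shows, $f$ enters only through the trivial inequality $1/v\ge 1/f$ in the final blow-up step; no comparison of $\chi$ with $\chi_f$, and no appeal to Proposition~\ref{confront}, is needed. Conversely, in (1) the hypothesis is genuinely $A\le\chi$ (not $A\le\chi_f$), precisely because the Riccati comparison there is against the $v$-model, and $v\le f$ gives no control on $\chi$ versus $\chi_f$.
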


\begin{Remark} 
\emph{In fact, for $(2)$ and $(4)$ to hold, it is enough that $z$ solves the Cauchy problem only on $[r_0,+\infty)$, for some $r_0>0$ and for some initial condition $z(r_0)$, $(vz')(r_0)$.} 
\end{Remark}

It is worth to make some observations on the conditions in the above theorem. 
\begin{itemize}
\item[-] In $(1)$, $A\le \chi$ cannot be replaced with $A\le \chi_f$. The reason is that, as already observed, no relations between $\chi$ and $\chi_f$ can be deduced from the sole requirement $v\le f$ in \eqref{F1}. However, note that $\chi_f$ appears both in \eqref{contrad} and in \eqref{condforte}. This is due to the technique developed for $(3)$ and $(4)$, which is different from that used for $(1)$ and $(2)$. 
\item[-] The lower bound \eqref{lowerbound} is sharp. Indeed, it can be showed that if $z$ is positive on $\R^+$ and $A\ge \chi$ on some $[r_0,+\infty)$, then necessarily $z$ is bounded from above by the quantity on the RHS of \eqref{lowerbound}, for some $C>0$.
\item[-] The right hand side of
\eqref{contrad} is independent both of $r$ and of the behavior of
$A$ after $R$. Therefore, the left hand side of \eqref{contrad} represents how much must $A$
exceed a critical curve modelled on $f$ in the compact region
$[R,r]$ in order to have a first zero for $z$, and it only
depends on the behavior of $A$ and $f$ before $R$ (the first
addendum of the RHS), and on the growth of $f$ after $R$. This is conceptually simpler than Calabi compactness condition, where the role of $a,b$ is balanced between the two sides of \eqref{calabialfinito}.
\end{itemize}
\begin{Remark}\label{indeboliamo}
\emph{The assumptions in  $(3)$ and $(4)$ can be weakened. Indeed, it is enough that $z$ solves the inequality $(vz')'+Avz \le 0$ on $\R^+$, and that its initial condition satisfies 
$$
\frac{vz'}{z}(0^+)=0.
$$
Note that sufficiently mild singularities of $z$ as $r\ra 0^+$ are allowed, depending on the order of zero of $v(r)$ at $0$.}
\end{Remark}
\begin{Remark}\label{remimpo}
\emph{Using \eqref{notaint} we see that \eqref{condforte} can be equivalently expressed as
\begin{equation}\label{nuova}
\limsup_{r \to +\infty} \left\{ \int_R^r \sqrt{A(s)} + \frac{1}{2} \log \int_r^{+\infty} \frac{\di s}{f(s)} \right\}=+\infty.
\end{equation}
}
\end{Remark}\par
The similarity between \eqref{nuova} and \eqref{laprimacalabi} is evident. Indeed, as a first application of Theorem \ref{main} let us show that Calabi condition \eqref{laprimacalabi} implies that the solution of  \eqref{sedici}, with 
 \begin{equation}\label{problem}
K(t)=\frac{{\Ricc}(\dot{\gamma},\dot{\gamma})}{m-1}(t) \ge 0,
  \end{equation}
is oscillatory. \par 
Indeed, choose any $v$ satisfying \eqref{sesto}, \eqref{V2}, \eqref{V3} and $v^{-1} \in L^1(+\infty) \backslash L^1 (0^+)$, for instance $v(r)=r^{m-1}$ for some $m\ge 3$. Let $r=r(t)$ be the inverse function of
 \begin{equation}\label{tr}
 t(r)=\left(\int_r^{+\infty }\frac{\di s}{v(s)} \right)^{-1}
\end{equation}
and define
\begin{equation}\label{zr}
z(r)=\frac{g(t(r))}{t(r)}
\end{equation}
Then $z$ solves
\begin{equation}\label{problemA}
\left\{
\begin{array}{l}
\disp (vz')'+\frac{K(t(r))t^4(r)}{v^2(r)} v(r) z=0 \qquad {\rm on} \ \R^+\\[0.4cm]
z(0)=1 \qquad (vz')(0)=0
\end{array}
\right.
\end{equation}
where now differentiation is with respect to the variable $r$. If \eqref{nuova} holds with $f=v$ and
$$
A(r)=\frac{K(t(r))t^4(r)}{v^2(r)} \ge 0,
$$
then $z$ oscillates and so does $g$. A change of variables shows that \eqref{nuova} is exactly \eqref{laprimacalabi}. \\
\par
The literature on the qualitative properties of solutions of \eqref{sedici} is enormous, and considerable steps towards the comprehension of the matter have been made throughout all of the $20^{\mathrm{th}}$ century. In particular, a number of sharp oscillatory and nonoscillatory conditions for $g$ have been found. Here, we only quote two of the finest. The first is the so-called Hille-Nehari criterion, see \cite{swanson}, p.45 and \cite{hille}, Theorem $5$ and Corollary $1$. 
\begin{Theorem}\label{teo_HilleNehari}
Let $K \in C^0(\R)\cap L^1(+\infty)$ be non-negative, and consider a solution $g$ of $g'' +Kg = 0$. Denote with $k(t)$, $k_*$ and $k^*$
respectively the quantities
$$
k(t) = t\int_t^{+\infty}K(s)\di s, \qquad k_* =
\liminf_{t\ra +\infty}k(t), \qquad k^*= \limsup_{t\ra +\infty}
k(t).
$$
We have: 
\begin{itemize}
\item[-] if $g$ is nonoscillatory, then necessarily $k_*\le
1/4$ and $k^*\le 1$;
\item[-] if $k(t)\le 1/4$ for $t$ large enough, in particular
if $k^*< 1/4$, then $g$ is nonoscillatory.
\end{itemize}
As a consequence, $k_*> 1/4$ is a sufficient condition for $g$ to be oscillatory.
\end{Theorem}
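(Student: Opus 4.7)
The plan is to base the proof on the Riccati substitution $w:=g'/g$, which transforms $g''+Kg=0$ into the first-order equation $w'+w^2+K=0$. Up to a sign change we may assume $g>0$ on some half-line $[t_0,+\infty)$; since $g''=-Kg\le 0$, $g$ is concave and $g'$ nonincreasing, and positivity forces $g'(t)\to L\ge 0$, so $w$ is nonincreasing, $w\ge 0$, and $w(t)\to 0$ (otherwise $w'\le -w_\infty^2<0$ would drive $w$ to $-\infty$). Writing $A(t):=\int_t^{+\infty} K(s)\,\di s$ and integrating the Riccati equation from $t$ to $+\infty$ yields the key identity
\begin{equation*}
w(t) = A(t) + \int_t^{+\infty} w^2(s)\,\di s, \qquad w \ge A \ge 0.
\end{equation*}

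For the sufficient criterion, if $k(t)\le 1/4$ for $t$ large I would produce a positive $C^2$ supersolution $\psi$ with $\psi''+K\psi\le 0$ on some $[T,+\infty)$, whence Sturm comparison rules out oscillation. The explicit candidate $w_0(t):=\frac{1}{4t}+A(t)$ does the job: a direct computation gives
\begin{equation*}
w_0'+w_0^2+K \,=\, -\frac{3}{16\, t^2}+\frac{A}{2t}+A^2 \,=\, \frac{(4tA-1)(4tA+3)}{16\, t^2},
\end{equation*}
which is $\le 0$ exactly when $-3/4\le tA(t)\le 1/4$. Since $A\ge 0$ this is precisely the hypothesis, and $\psi(t):=\exp\int_T^t w_0$ fulfills all requirements.

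For the necessary conditions, if $L>0$ then $g(t)\sim Lt$ gives $tw(t)\to 1$ and $t\int_t^{+\infty} w^2\to 1$, whence $tA(t)\to 0$ and both bounds hold trivially. In the case $L=0$, set $\rho(t):=tw(t)$; direct differentiation yields $(tw)'(t)=w(t)(1-\rho(t))-tK(t)$, so whenever $\rho\ge 1$ both summands are $\le 0$ and $\rho$ cannot re-enter $\{\rho>1\}$ once it has left. If instead $\rho>1$ throughout, monotonicity gives $\rho\to L\ge 1$, and the asymptotics $w\sim L/t$ fed back into the integral identity yield $tA(t)\to L-L^2\ge 0$, forcing $L=1$; in every scenario $\limsup\rho\le 1$, hence $k^*\le 1$. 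For $k_*\le 1/4$, I argue by contradiction: if $k_*>1/4$ then $A(t)\ge c/t$ eventually for some $c>1/4$, and iterating $w\ge A+\int_t^{+\infty}(\,\cdot\,)^2$ produces $A_0:=A$, $A_{n+1}:=A+\int_t^{+\infty} A_n^2$, with $tA_n(t)\ge x_n$ where $x_{n+1}=c+x_n^2$, $x_0=c$. Since the fixed-point equation $x=c+x^2$ has discriminant $1-4c<0$, $x_n\to+\infty$, forcing $tw(t)\ge x_n$ and contradicting $\rho\le 1$.

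The final claim, that $k_*>1/4$ suffices for oscillation, is the contrapositive of the first bullet. I expect the main obstacle to be extracting the sharp constant $1$ in $k^*\le 1$: the integral estimate $\rho(t)\le a(t)+M(t)^2$ with $a:=tA$ and $M(t):=\sup_{s\ge t}\rho(s)$ only gives $M_\infty-M_\infty^2\le k^*$, which is vacuous when $M_\infty>1$. The precise value $1$ must be extracted from the flow of $(tw)'$ near the separatrix $\rho=1$, and the borderline subcase $\rho\to L>1$ must be ruled out via the identity $\lim tA=L-L^2$ together with $A\ge 0$.
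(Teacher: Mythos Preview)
The paper does not give its own proof of this theorem: it is quoted as a classical result with references to Swanson and to Hille's original paper, so there is no in-paper argument against which to compare.

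Your argument is correct and is essentially the classical Riccati approach due to Hille. The derivation of the integral identity $w(t)=A(t)+\int_t^{+\infty} w^2$ from $w'+w^2+K=0$ and $w(t)\to 0$ is clean. For the nonoscillation half, the explicit choice $w_0(t)=\tfrac{1}{4t}+A(t)$ is exactly right, and your factorization
\[
w_0'+w_0^2+K \;=\; -\frac{3}{16t^2}+\frac{A}{2t}+A^2 \;=\; \frac{(4tA-1)(4tA+3)}{16t^2}
\]
is correct; positivity of $\psi=\exp\int w_0$ together with Sturm comparison then rules out oscillation. For the necessary half, the dichotomy on $\rho=tw$ works as stated: either $\rho\le 1$ eventually (whence $k^*\le\limsup\rho\le 1$ since $tA\le\rho$), or $\rho>1$ throughout, in which case $\rho'\le 0$ forces $\rho\searrow\ell\ge 1$, and feeding $w\sim\ell/t$ into the integral identity gives $tA\to\ell-\ell^2\ge 0$, hence $\ell=1$. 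The iteration $x_{n+1}=c+x_n^2$ with $c>1/4$ indeed diverges (no real fixed point), yielding the contradiction $tw(t)\ge x_n\to+\infty$ for each fixed large $t$.

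Two small points worth tightening. First, you reuse the symbol $L$ for $\lim\rho$ after having set $L=\lim g'$; rename one of them. Second, the assertion that $\rho$ cannot re-enter $\{\rho>1\}$ once it has left deserves one extra line: if $\rho(t_1)\le 1$ and $\rho(t_2)>1$ with $t_2>t_1$, set $t_*=\sup\{t\in[t_1,t_2]:\rho(t)\le 1\}$; then $\rho\ge 1$ on $[t_*,t_2]$, so $\rho'\le 0$ there, forcing $\rho(t_2)\le\rho(t_*)=1$, a contradiction. With these cosmetic fixes the proof is complete.
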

\begin{Remark}\label{rem_fite}
\emph{If $K\not\in L^1(+\infty)$, the result applies with
$k_*=k^*=+ \infty$, and $g$ is thus oscillatory.
This case is due to W.B. Fite \cite{fite}.}
\end{Remark}
\begin{Remark}
\emph{Improving on an old criterion of Kneser, it can be showed (see \cite{bmr2}, Proposition 2.23) that if $k(t) \le 1/4$ on the whole $\R^+$, then the solution $g$ of \eqref{sedici} is positive and increasing on $\R^+$.}
\end{Remark}

\begin{Remark}
\emph{Hille-Nehari criterion detects the oscillation of $g$ when $K(t) \ge B^2/(1+t^2)$ on $\R^+$, for some $B>1/2$. In a geometrical context, this particular case has been investigated in \cite{cheegergromovtaylor}, where the authors have also obtained upper bounds for the first zero of $g$ solving \eqref{sedici}.}  
\end{Remark}

\begin{Remark}\label{rem_euler0}
\emph{For every $B\in [0,1/2]$, the Cauchy problem associated to the Euler
equation
$$
\left\{ \begin{array}{l} g'' + \dfrac{B^2}{(1+t)^2} g = 0, \\[0.4cm]
g(0)=0, \quad g'(0)=1, \end{array}\right.
$$
has the explicit, positive solution
$$
g(s) =\left\{
\begin{array}{ll} \sqrt{1+t}\log(1+t) & \quad \text{if } \ B=1/2;
\\[0.3cm]
\disp \frac{1}{\sqrt{1-4B^2}}\Big((1+t)^{B''} - (1+t)^{1-B''}\Big)
&
 \quad \text{if } \ B \in [0,1/2),
\end{array}\right.
$$
where
$$
B'' = \frac{1+\sqrt{1-4B^2}}{2} \in (1/2,1]
$$
(see \cite{swanson}, p.45). For $B=1/2$, this example shows
that Hille-Nehari criterion is sharp.}
\end{Remark}

When $k_*= k^* = 1/4$, Hille-Nehari criterion cannot grasp the behaviour of $g$. As we shall see, combining $(2)$ and $(4)$ of Theorem \ref{main} in an iterative way, we can construct sharper and sharper oscillation and nonoscillation criteria that can detect the behaviour of $g$ even in some cases when the Hille-Nehari theorem fails to give information.\par
The second result we quote allows sign-changing potentials $K$ and is due to R. Moore (see \cite{moore}, Theorem 2)

\begin{Theorem}\label{moorenehari}
Let $K\in C^0(\R)$. Each solution $g$ of $g'' + Kg=0$ is oscillatory provided
that, for some $\lambda\in [0,1)$, there exists
\begin{equation}\label{condimoore}
\lim_{t\ra +\infty} \int_0^t s^\lambda K(s)\di s =
+\infty,
\end{equation}
\end{Theorem}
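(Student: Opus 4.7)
The plan is to argue by contradiction via a Riccati transform and a self-improving estimate. Suppose $g$ is a nonoscillatory solution of $g''+Kg=0$; up to replacing $g$ with $-g$, we may assume $g(t)>0$ on some half-line $[T_0,+\infty)$. Set $u=g'/g$, so that on $[T_0,+\infty)$
\[
u'(t) = -K(t) - u^2(t).
\]
Fix $T\geq T_0$, multiply by $s^\lambda$ and integrate by parts on $[T,t]$, obtaining the identity
\[
t^\lambda u(t) \;=\; T^\lambda u(T) + \lambda\!\int_T^t \!s^{\lambda-1}u(s)\,ds - \int_T^t s^\lambda K(s)\,ds - \int_T^t s^\lambda u^2(s)\,ds.
\]
Write $F(t)=\int_0^t s^\lambda K(s)\,ds$ and $I(t)=\int_T^t s^\lambda u^2(s)\,ds$, and note $F(t)\to +\infty$ by hypothesis.

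The first key step is a Young (or Cauchy–Schwarz) estimate of the cross term: writing $\lambda s^{\lambda-1}u = \lambda\,(u\,s^{\lambda/2})\,s^{\lambda/2-1}$, one obtains
\[
\Bigl|\lambda\!\int_T^t s^{\lambda-1}u\,ds\Bigr| \;\leq\; \tfrac12\, I(t) + \tfrac{\lambda^2}{2}\!\int_T^t s^{\lambda-2}\,ds.
\]
Here the condition $\lambda\in[0,1)$ enters crucially: it ensures $\int_T^{+\infty} s^{\lambda-2}ds<+\infty$, so the last term is bounded by a constant $C_1$. Plugging back into the identity,
\[
t^\lambda u(t) \;\leq\; D - F(t) - \tfrac12 I(t),
\]
with $D$ a constant depending on $T,\lambda,u(T)$. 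Since $F(t)\to+\infty$, this already gives $t^\lambda u(t)\to -\infty$; in particular for any $M>0$, $u(t)\leq -M\,t^{-\lambda}$ for $t$ large, and therefore $I(t)\geq M^2\!\int^t s^{-\lambda}ds\to +\infty$ (again using $\lambda<1$).

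The decisive step is the bootstrap. Once $I(t)$ is large and $F(t)$ is large, the inequality above improves to $t^\lambda u(t) \leq -\tfrac12 I(t)$ for $t\geq T'$ sufficiently large (with $I(T')>0$; the case $u\equiv 0$ is ruled out because it would force $K\equiv 0$, contradicting $F\to+\infty$). Squaring,
\[
u^2(t) \;\geq\; \frac{I(t)^2}{4\,t^{2\lambda}},
\qquad\text{hence}\qquad
I'(t) \;=\; t^\lambda u^2(t) \;\geq\; \frac{I(t)^2}{4\,t^\lambda}.
\]
This Bernoulli-type inequality rewrites as $\bigl(1/I\bigr)'(t)\leq -\tfrac{1}{4}\,t^{-\lambda}$, whose integration on $[T',t]$ yields
\[
\frac{1}{I(t)} \;\leq\; \frac{1}{I(T')} - \frac{t^{1-\lambda}-(T')^{1-\lambda}}{4(1-\lambda)}\;\xrightarrow[t\to\infty]{}\; -\infty,
\]
contradicting $I(t)>0$. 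This contradiction forces every solution to oscillate.

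The main obstacle, and the reason a naive one-step Riccati estimate is insufficient, is that the bound $t^\lambda u(t)\to -\infty$ only shows that $g$ decays fast, not that it must vanish. The contradiction comes only after the self-improvement: $u$ is not merely large and negative, but large enough that $u^2$ integrated against $s^\lambda$ satisfies a super-linear ODE whose solutions blow up in finite "time" measured by $\int s^{-\lambda}ds$. Both occurrences of $\lambda<1$—in controlling the cross term via $\int s^{\lambda-2}ds<\infty$, and in forcing $\int s^{-\lambda}ds=+\infty$ that drives the final blowup—are therefore essential; the argument collapses exactly at the threshold $\lambda=1$, explaining the sharp range in the statement.
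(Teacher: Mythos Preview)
The paper does not contain its own proof of this statement: Theorem~\ref{moorenehari} is quoted as a result of R.~Moore \cite{moore} (Theorem~2) and is used only as a benchmark against which the authors compare their new criteria. So there is no in-paper argument to match against.

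That said, your argument is correct and is essentially the classical Riccati route to Moore's theorem. The identity for $t^\lambda u(t)$ obtained by integrating $s^\lambda u'= -s^\lambda K - s^\lambda u^2$ by parts is right, and the Young bound on the cross term uses $\lambda<1$ exactly where it must, namely to make $\int^{+\infty}s^{\lambda-2}\,ds$ finite. From $t^\lambda u(t)\le D - F(t) - \tfrac12 I(t)$ and $F(t)\to+\infty$ you correctly extract first $t^\lambda u(t)\to -\infty$, then $I(t)\to+\infty$ (via $\int^{+\infty}s^{-\lambda}\,ds=+\infty$), and finally the self-improvement $t^\lambda u(t)\le -\tfrac12 I(t)$ once $D-F(t)\le 0$. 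The resulting Bernoulli inequality $I'\ge I^2/(4t^\lambda)$ and the divergence of $\int^{+\infty}s^{-\lambda}\,ds$ force $1/I$ to become negative, a contradiction. Your remark about $I(T')>0$ is already secured by the step $I(t)\to+\infty$, so the aside on $u\equiv 0$ is harmless but unnecessary. The two appearances of the constraint $\lambda<1$ are identified precisely, which also explains why the paper later emphasizes (Remark after Theorem~\ref{moorenehari}) that the range cannot be pushed to $\lambda=1$.
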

\begin{Remark}
\emph{Setting $\lambda=0$ in Moore statement we recover a result of W. Ambrose \cite{ambrose} and A. Wintner \cite{wintner} (one can also consult \cite{guimaraes}, Corollaries
3.5 and 3.6 for a different proof and a generalization). Remark \ref{rem_euler0} shows that in Moore result the interval of the parameter $\lambda$ cannot be extended to $[0,1]$. Thus, Euler equation suggests that, when restricted to the case $K\ge 0$, Moore criterion is somehow weaker than that of Hille-Nehari.}
\end{Remark}
Another observation on Moore  result is that, although sharp from many points of view, it requires
that the negative part of $K$ be, loosely speaking, globally
smaller than the positive part. This is the essence of the
existence of the limit in \eqref{condimoore}. One of our goal in the next section will be to obtain an oscillation criterion that allows $K$ to have a relevant negative part. Furthermore, with the aid of \eqref{contrad}, we will also find a condition in finite form for the existence of a first zero that allows $K$ to be negative somewhere. As far as we know, there is still no result in this direction besides some very recent work of P. Mastrolia, G. Veronelli and M. Rimoldi, which we recall here for the sake of completeness.
\begin{Theorem}[\textbf{Theorem 5 of \cite{gionamichipaolo}}]\label{pmg}
Suppose that $K \in L^\infty(\R^+_0)$ satisfies $K\ge -B^2$, for some $B\ge 0$, and let $g$ be a solution of \eqref{sedici}. Suppose that there exist $0<a<b$ and
$\lambda\neq 1$ for which either
\begin{equation}\label{caselambda1}
\int_a^b s K_\gamma(s)\di s >
B\left\{b+a\frac{e^{2Ba}+1}{e^{2Ba}-1}\right\} + \frac 14
\log\left(\frac ba\right)
\end{equation}
or
\begin{equation}\label{caselambdaneq1}
\int_a^b s^\lambda K_\gamma(s)\di s >
B\left\{b^\lambda+a^\lambda\frac{e^{2Ba}+1}{e^{2Ba}-1}\right\} +
\frac{\lambda^2}{4(1-\lambda)}\left\{a^{\lambda-1}-b^{\lambda-1}\right\}
\end{equation}
holds (if $B=0$, this has to be intended in a limit sense). Then, $g$ has a first zero.
\end{Theorem}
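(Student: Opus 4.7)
The natural plan is to argue by contradiction: assume $g>0$ on all of $\R^+$ and introduce the Riccati substitution $w=g'/g$, which then belongs to $C^1(\R^+)$, solves $w'+w^2+K=0$, and satisfies $w(t)=t^{-1}+O(t)$ as $t\to 0^+$. Multiplying this Riccati equation by $t^\lambda$ and integrating by parts on $[a,b]$ gives the identity
\begin{equation*}
\int_a^b t^\lambda K(t)\,\di t \,=\, a^\lambda w(a) - b^\lambda w(b) - \int_a^b \bigl(t^\lambda w^2 - \lambda\, t^{\lambda-1} w\bigr)\di t.
\end{equation*}
The integrand inside the last integral is handled by completing the square,
\begin{equation*}
t^\lambda w^2 - \lambda\, t^{\lambda-1} w \,=\, \Bigl(t^{\lambda/2} w - \tfrac{\lambda}{2}\, t^{\lambda/2-1}\Bigr)^{\!2} - \tfrac{\lambda^2}{4}\, t^{\lambda-2},
\end{equation*}
and discarding the non-negative quadratic yields
\begin{equation*}
\int_a^b t^\lambda K(t)\,\di t \,\le\, a^\lambda w(a) - b^\lambda w(b) + \tfrac{\lambda^2}{4}\int_a^b t^{\lambda-2}\,\di t,
\end{equation*}
where the last integral equals $\log(b/a)$ if $\lambda=1$ and $(a^{\lambda-1}-b^{\lambda-1})/(1-\lambda)$ otherwise, reproducing the two separate formulas in the statement.

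The core of the proof is then the two-sided bound
\begin{equation*}
-B \,\le\, w(t) \,\le\, B\coth(Bt) \qquad \text{for all } t>0,
\end{equation*}
using only $K\ge -B^2$. For the upper bound I compare $g$ with the Cauchy solution $h(t)=B^{-1}\sinh(Bt)$ of $h''-B^2 h=0$: the Wronskian $W=g'h-gh'$ satisfies $W'=-(K+B^2)gh\le 0$ with $W(0^+)=0$, hence $W\le 0$ on $\R^+$, and dividing by $gh>0$ gives $w(t)\le h'/h=B\coth(Bt)$; in particular $w(a)\le B(e^{2Ba}+1)/(e^{2Ba}-1)$. For the lower bound, if $w(t_0)<-B$ at some $t_0>0$, then $v:=-w-B$ is positive at $t_0$ and
\begin{equation*}
v' \,=\, -w' \,=\, w^2+K \,\ge\, w^2-B^2 \,=\, v(v+2B) \,\ge\, v^2,
\end{equation*}
so that $v$ blows up in finite time, contradicting the standing assumption that $g$, and hence $w$, is defined on all of $\R^+$. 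Consequently $-b^\lambda w(b)\le Bb^\lambda$ and $a^\lambda w(a)\le Ba^\lambda(e^{2Ba}+1)/(e^{2Ba}-1)$.

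Plugging these two bounds into the integral estimate produces an upper bound for $\int_a^b t^\lambda K(t)\di t$ which coincides exactly with the right-hand side of \eqref{caselambda1} when $\lambda=1$ and of \eqref{caselambdaneq1} when $\lambda\neq 1$, contradicting the hypothesis and forcing $g$ to vanish somewhere on $\R^+$. The technically delicate step is the sharp lower bound $w\ge -B$: this is the unique point at which the sign condition $K\ge -B^2$ enters in a non-linear way, and it rests on the fact that the autonomous Riccati equation $\tilde w'=B^2-\tilde w^2$ has $\tilde w\equiv -B$ as an unstable equilibrium from below. The degenerate case $B=0$ is recovered by passing to the limit $B\to 0^+$ in both bounds, under which $B\coth(Ba)\to 1/a$ and the stated formulas acquire their natural limiting form.
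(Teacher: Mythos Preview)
The paper does not contain a proof of this theorem: it is quoted verbatim from \cite{gionamichipaolo} (and, for $B=0$, from Nehari \cite{nehari}) purely for the sake of comparison with the authors' own Theorems~\ref{compafinito} and~\ref{teo_calabi}. There is therefore no ``paper's own proof'' against which to compare your argument.

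That said, your proof is correct and is essentially the argument one expects. The Riccati substitution $w=g'/g$ together with integration of $t^\lambda(w'+w^2+K)=0$ on $[a,b]$ and completion of the square is the standard route; the only nontrivial input is the two-sided barrier $-B\le w(t)\le B\coth(Bt)$, and both halves of your derivation are sound. For the upper bound, the Wronskian comparison with $h(t)=B^{-1}\sinh(Bt)$ is clean and uses $K+B^2\ge 0$ exactly once. For the lower bound, your blow-up argument for $v=-w-B$ is the right idea; note that the contradiction is with the fact that, under the standing hypothesis $g>0$ on $\R^+$, the function $w=g'/g$ is finite on every compact of $\R^+$ (since $g\in C^1$ and $g$ is bounded away from zero on compacta), so $v$ cannot diverge at a finite time. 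With $w(a)\le B\coth(Ba)=B(e^{2Ba}+1)/(e^{2Ba}-1)$ and $-w(b)\le B$ in hand, the computation of $\tfrac{\lambda^2}{4}\int_a^b t^{\lambda-2}\,\di t$ in the two cases $\lambda=1$ and $\lambda\neq 1$ reproduces \eqref{caselambda1} and \eqref{caselambdaneq1} exactly.

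One small presentational point: in the statement as printed, the clause ``$\lambda\neq 1$'' is attached to the pair $(a,b,\lambda)$, but \eqref{caselambda1} is precisely the limiting case $\lambda=1$; you have parsed this correctly and treated both cases.
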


\begin{Remark}
\emph{The case $B=0$ of the above result is due to Z. Nehari, see \cite{nehari}, p.432 (8), with an entirely different proof. We point out that, in \cite{gionamichipaolo}, the authors also give an upper bound for the position of the first zero.} 
\end{Remark}



\section{Extensions of Calabi compactness criterion }
We shall now deal with \eqref{cauchypr} under the further assumption that $A$ is possibly negative.
Hereafter, we require the validity of \eqref{A1}, \eqref{sesto},
\eqref{V2}, \eqref{V3}, \eqref{F1} . Let $z\in
\lip_\loc(\R^+_0)$ be a solution of
\begin{equation}\label{ancoracauchy}
\left\{ \begin{array}{l} (vz')' + Avz = 0 \qquad
\text{on } \ \R^+,\\[0.2cm]
z(0^+) = z_0>0,
\end{array}\right.
\end{equation}
or of the analogous problem on $[r_0,+\infty)$. \par 

Choose a function $W\in L^\infty_\loc(\R^+_0)$ such that
\begin{equation}\label{ipotesiF}
W\ge 0 \quad \text{a.e. on } \R^+, \qquad W+A\ge 0 \quad
\text{a.e. on } \ \R^+.
\end{equation}
For instance, $W$ can be taken to be the negative part of $A$. To
apply the results of the previous section, we need to produce,
starting from \eqref{ancoracauchy} and $W$, a solution
$\widetilde{z}$ of a linear ODE of the type
$(\bar{v}\widetilde{z}')' +
\bar{A}\bar{v}\widetilde{z}=0$, for some new volume
function $\bar{v}$ and some $\bar{A}\ge 0$. Towards
this purpose, consider a solution $w(r) \in
\mathrm{Lip}_{\mathrm{loc}}(\R^+_0)$ of
\begin{equation}\label{cauchyconb2}
\left\{ \begin{array}{l} (vw')' -Wvw \ge 0 \qquad \mathrm{on \ }
\R^+ \\[0.2cm]
w(0^+) = w_0>0.
\end{array}\right.
\end{equation}

Note that from $$(vw')'\ge Wvw$$ we deduce $w'\ge 0$ a.e., hence $w$ has
a positive essential infimum on $\R_0^+$. Therefore, the function
$\widetilde{z} = z/w$ is well defined on $\R_0^+$ and solves
\begin{equation}\label{lineareperz}
\left\{ \begin{array}{l} \big([vw^2]\widetilde{z}'\big)' +
\big(A+W\big)[vw^2]\widetilde{z}\le 0 \qquad \text{on } \ \R^+\\[0.2cm]
\widetilde{z}(0) = z_0/w_0>0,
\end{array}\right.
\end{equation}
As observed in Remark \ref{indeboliamo}, the inequality sign in \eqref{lineareperz} is irrelevant for the proofs of $(3)$, $(4)$ of Theorem \ref{main}. In this way, $(3)$ and $(4)$ can be extended to cover sign-changing potentials by simply replacing $A$ with $A+W$, $v$ with $vw^2$ and $f$ with $f w^2$. The main problem therefore shifts to the search of explicit solutions $w$ of \eqref{cauchyconb2}, once $v$ and $W$ are given. \par
Up to taking some care when dealing with the initial condition, the same procedure can be carried on even when $v\equiv 1$. In this case, we are able to provide an explicit form for $w$ when the potential $W$ is a polynomial. This leads to the following theorem (see Theorem 6.41 of \cite{bmr2}). In the statement below, we denote with $I_\nu$ is the positive Bessel function of order $\nu$.

\begin{Theorem}[\textbf{Compactness with sign-changing curvature}]\label{compafinito}
Let $(M,\metric)$ be a complete m-dimensional Riemannian manifold. For each unit
speed geodesic $\gamma$ emanating from a fixed origin $o$, define
$$
K_\gamma(t) = \frac{\mathrm{Ricc}(\gamma',\gamma')(t)}{m-1}.
$$
Assume that one of the following set of assumptions is met.
\begin{itemize}
\item[$(i)$] The function $K_\gamma(t)$ satisfies
$$
K_\gamma(t) \ge -B^2\big(1+t^2\big)^{\alpha/2} \qquad \text{on } \
\R^+,
$$
for some $B>0$ and $\alpha\ge -2$ possibly depending on $\gamma$.
Having set
$$
0 \le  A_\gamma(t) = K_\gamma(t) + B^2\big(1+t^2\big)^{\alpha/2},
$$
suppose also that, for some $0<S<t$ such that $A_\gamma\not\equiv
0$ on $[0,S]$,
\begin{equation} \label{ancoragamma}
\begin{array}{l}
\disp
\int_S^t\left(\sqrt{A_\gamma(\sigma)}-\sqrt{\chi_{w^2}(\sigma)}\right)\di
\sigma \\[0.5cm]
\qquad \qquad \qquad
> \disp -\frac 12 \left(\log \int_0^S
A_\gamma(\sigma)w^2(\sigma)\di \sigma + \log\int_S^{+\infty}
\frac{\di \sigma}{w^2(\sigma)}\right),
\end{array}
\end{equation}
where
\begin{equation}\label{wespo}
w(t) = \left\{\begin{array}{ll}
\disp \sinh\left(\frac{2B}{2+\alpha}\left[(1+t)^{1+\frac{\alpha}{2}}-1\right]\right)
& \quad \text{if } \ \alpha \ge 0;\\[0.3cm]
\disp t^{1/2}I_{\frac{1}{2+\alpha}}\left(\frac{2B}{2+\alpha}t^{1+\frac{\alpha}{2}}\right)
& \quad \text{if } \ \alpha \in (-2,0); \\[0.3cm]
\disp t^{B'} & \quad \text{if } \ \alpha =-2,
\end{array}\right.
\end{equation}
and $B'= (1+\sqrt{1+4B^2})/2$.
\item[$(ii)$] The function $K_\gamma(t)$ satisfies
$$
K_\gamma(t) \ge \frac{B^2}{(1+t)^2} \qquad \text{on } \ \R^+,
$$
for some $B\in [0,1/2]$ possibly depending on $\gamma$. Having set
$$
0 \le A_\gamma(t) = K_\gamma(t) - \frac{B^2}{(1+t)^2},
$$
suppose also that, for some $0<S<t$ such that $A_\gamma\not\equiv
0$ on $[0,S]$, inequality \eqref{ancoragamma} holds with
\begin{equation}\label{wpoli}
w(t) = \left\{ \begin{array}{ll}
(1+t)^{B''}-(1+t)^{1-B''} & \quad \text{if } \ B \in [0,1/2); \\[0.3cm]
\sqrt{1+t}\log(1+t) & \quad \text{if } \ B=1/2,
\end{array}\right.
\end{equation}
and $B''= (1+\sqrt{1-4B^2})/2$.
\end{itemize}
Then, $M$ is compact and has finite fundamental group.
\end{Theorem}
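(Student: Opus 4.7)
The plan is as follows. By Galloway's Theorem \ref{teo_galloway}, it suffices to show that for every unit speed geodesic $\gamma$ issuing from $o$, the Jacobi-type ODE $g''+K_\gamma g=0$ with $g(0)=0$, $g'(0)=1$ admits a first positive zero. Since $K_\gamma$ is allowed to be negative (case $(i)$) or merely non-negative with a Hille--Nehari-type lower bound (case $(ii)$), a direct application of Theorem \ref{main}$(3)$ is not possible. I would instead perform the gauge transformation $\widetilde{z}=g/w$ outlined in the paragraph preceding \eqref{lineareperz}, where $w$ is the explicit function prescribed by \eqref{wespo} or \eqref{wpoli}, and then apply Theorem \ref{main}$(3)$ to the resulting equation with weight $v=f=w^2$.

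A direct computation gives
$$
(w^2 \widetilde{z}')' + \left(K_\gamma + \frac{w''}{w}\right) w^2 \widetilde{z} = 0,
$$
so the role of $w$ is to absorb the known negative or Hille--Nehari part of $K_\gamma$ into the coefficient $w''/w$. In case $(ii)$ the functions in \eqref{wpoli} are positive solutions of the Euler equation $w''+B^2(1+t)^{-2}w=0$ vanishing at $0$, as a routine check using the identity $B''(1-B'')=B^2$ confirms; hence $w''/w=-B^2(1+t)^{-2}$ and the coefficient above equals exactly $A_\gamma$. In case $(i)$ I would verify that $w''\ge B^2(1+t^2)^{\alpha/2}\, w$ on $\R^+$: for $\alpha\ge 0$ via the elementary inequality $(1+t)^\alpha\ge(1+t^2)^{\alpha/2}$ after differentiating the $\sinh$; for $\alpha\in(-2,0)$ by first recognizing that $t^{1/2}I_\nu(\frac{2B}{2+\alpha}t^{1+\alpha/2})$ is an exact solution of $w''=B^2 t^\alpha w$ via the Bessel ODE for $I_\nu$, and then using $t^\alpha\ge(1+t^2)^{\alpha/2}$ valid for $\alpha<0$; and for $\alpha=-2$ from $B'(B'-1)=B^2$ together with $t^{-2}\ge(1+t^2)^{-1}$. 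Consequently $w''/w\ge B^2(1+t^2)^{\alpha/2}$, and as long as $\widetilde{z}\ge 0$ the equation above implies
$$
(w^2 \widetilde{z}')' + A_\gamma\, w^2\, \widetilde{z} \;\le\; 0, \qquad A_\gamma\ge 0,
$$
which is precisely the weakened form of the Cauchy problem allowed by Remark \ref{indeboliamo}.

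With this reduction in hand, the hypothesis \eqref{ancoragamma} is literally condition \eqref{contrad} of Theorem \ref{main}$(3)$ with $v=f=w^2$ and $A=A_\gamma$. The theorem therefore delivers a first zero for $\widetilde{z}$, and hence for $g$ since $w>0$ on $\R^+$, on some interval $(0,\overline{R}]$. Galloway's theorem then yields the compactness of $M$ and the finiteness of $\pi_1(M)$.

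The main technical obstacle I anticipate is the bookkeeping around the initial condition and the structural hypotheses on the weight $w^2$. Near $0$ one has $g\sim t$ and $w\sim c\, t$ in all cases except $(i)$ with $\alpha=-2$, where $w=t^{B'}$ with $B'>1$ forces the mild singularity $\widetilde{z}(t)\sim t^{1-B'}$; verifying that this is covered by Remark \ref{indeboliamo} amounts to showing $(w^2\widetilde{z}'/\widetilde{z})(0^+)=0$, which follows from $2B'-1>0$. The requirements \eqref{sesto}, \eqref{V2}, \eqref{V3}, \eqref{VL1}, \eqref{F1}, \eqref{FL1} on $w^2$ must then be checked case by case, but they all reduce to the facts that $w$ is smooth and positive on $\R^+$, vanishes at $0$, and grows at infinity fast enough (exponentially in case $(i)$, as a power strictly greater than $1/2$ in case $(ii)$) to make $1/w^2$ integrable at infinity.
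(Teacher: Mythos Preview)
Your proposal is correct and follows essentially the same approach as the paper: reduce to Galloway's theorem, perform the substitution $\widetilde{z}=g/w$ with the explicit $w$ of \eqref{wespo} or \eqref{wpoli}, verify the resulting differential inequality for $\widetilde{z}$ together with the initial condition $(w^2\widetilde{z}'/\widetilde{z})(0^+)=0$, and invoke Theorem~\ref{main}(3) via Remark~\ref{indeboliamo}. Your write-up is in fact slightly more explicit than the paper's in two places: you spell out how the inequality $w''\ge B^2(1+t^2)^{\alpha/2}w$ is obtained in each subcase of \eqref{wespo}, and you single out the $\alpha=-2$ case where $\widetilde{z}(0^+)$ is singular and check directly that Remark~\ref{indeboliamo} still applies because $2B'-1>0$.
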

\begin{Remark}
\emph{Note that, both for \eqref{wespo} and for
\eqref{wpoli}, the critical curve related to $w^2$ exists
since $1/w^2 \in L^1(+\infty)$.}
\end{Remark}
\begin{proof}
By Theorem \ref{teo_galloway}, it is enough to prove that, for every $\gamma$
issuing from $o$, the solution $g$ of
\begin{equation}\label{gODE}
\left\{ \begin{array}{l}
g''+K_\gamma(t)g = 0 \\[0.2cm]
g(0)=0, \quad g'(0)=1
\end{array}\right.
\end{equation}
has a first zero.\\
(i) A straightforward computation shows that the
function $w$ in \eqref{wespo} is a positive solution of
$$
w'' - B^2(1+t^2)^{\alpha/2}w\ge 0 \qquad \text{on } \R^+
$$
whose initial condition, in the cases $\alpha\in (-2,0)$ and
$\alpha\ge 0$, is
\begin{equation}\label{initialw}
w(0)=0, \qquad w'(0)=C>0.
\end{equation}
Consider $\widetilde{z}=g/w$. Then, $\widetilde{z}$ solves
\begin{equation}\label{perzbar}
(w^2 \widetilde{z}')' + A_\gamma w^2 \widetilde{z} \le 0 \qquad
\text{on } \R^+.
\end{equation}
In order to apply $(3)$ of Theorem \ref{main} to the differential
inequality \eqref{perzbar}, we shall make use of Remark
\ref{indeboliamo}. From \eqref{initialw}, in each of the cases of
\eqref{wespo} we obtain
\begin{equation}\label{passochiave!}
\frac{w^2 \widetilde{z}'}{\widetilde{z}}(0^+) =
\left(w^2\frac{g'}{g} - w w'\right)(0^+) = 0.
\end{equation}
We can thus apply $(3)$ of Theorem \ref{main}, and
\eqref{ancoragamma} implies that $\widetilde{z}$ (hence $g$)
has a first zero. Case $(ii)$ is analogous. Indeed, by Remark
\ref{rem_euler0}, $w$ in \eqref{wpoli} is a solution of
the Cauchy problem
$$
\left\{\begin{array}{l} \disp w'' +\frac{B^2}{(1+t)^2}w=0 \\[0.4cm]
g(0)=0, \quad g'(0)=C>0.
\end{array}\right.
$$
\end{proof}
\begin{Remark}
\emph{We recall that, by \eqref{notaint}, inequality
\eqref{ancoragamma} is equivalent to the somehow simpler
\begin{equation}\label{ineqprimozerosimply}
\int_S^t\sqrt{A_\gamma(\sigma)}\di \sigma > -\frac 12 \left(\log
\int_0^S A_\gamma(\sigma)w^2(\sigma)\di \sigma +
\log\int_t^{+\infty} \frac{\di \sigma}{w^2(\sigma)}\right).
\end{equation}
However, \eqref{ancoragamma} put in evidence that the RHS does not
depend on $t$, as opposed to conditions like
\eqref{calabialfinito} and \eqref{caselambdaneq1} where both $a$
and $b$ appear in the LHS as well as in the RHS. Furthermore, although somehow complicated, \eqref{ancoragamma} is entirely explicit once we are able to compute the critical curve related to $w^2$. In general, this can only be done numerically, but in some cases a closed expression can be given. For instance, this is so for $m=3$, $B=1/2$ in \eqref{wpoli}, for $B=0$ in \eqref{wpoli} and
for $\alpha=0,-2$ in \eqref{wespo}:
$$
\int_t^{+\infty} \frac{\di \sigma}{w^2(\sigma)} =
\left\{\begin{array}{ll} \disp
\frac{t^{-\sqrt{1+4B^2}}}{\sqrt{1+4B^2}} &
\quad \text{for } \eqref{wespo}, \ \alpha =-2 \text{ and for } B=0; \\[0.6cm]
\disp B^{-1} \big[\mathrm{coth}(Bt) -1 \big] & \quad \text{for } \eqref{wespo}, \ \alpha =0; \\[0.3cm]
\disp \frac{1}{\log(1+t)} & \quad \text{for }
\eqref{wpoli}, \ B=1/2, \ m=3.
\end{array}\right.
$$
Therefore, in the case $B=0$, \eqref{ineqprimozerosimply} reads
$$
\int_S^t \sqrt{K_\gamma(\sigma)}\di \sigma > - \frac 12 \left(\log
\int_0^S \sigma^2 K_\gamma(\sigma)\di \sigma - \log t \right),
$$
that should be compared to \eqref{calabialfinito}, while, for $\alpha=0$, \eqref{ineqprimozerosimply} becomes  
$$
\int_S^t \sqrt{K_\gamma(\sigma)+ B^2}\di \sigma > - \frac 12 \left(\log
\int_0^S K_\gamma(\sigma)\sinh^2(B\sigma)\di \sigma +
\log \frac{\coth(Bt)-1}{B}  \right),
$$
that should be compared to \eqref{caselambda1} and \eqref{caselambdaneq1}.}
\end{Remark}

Easier expressions can be obtained when considering oscillatory conditions. We state the result in analytic form.
\begin{Theorem}[\textbf{Generalized Calabi criterion}]\label{teo_calabi}
Let $K\in L^\infty_\loc(\R_0^+)$, and let $g\not\equiv 0$ be a
solution of $g''+Kg=0$. Then, $g$ oscillates in each of the
following cases:
\begin{itemize}
\item[$(1)$] $K$ satisfies
\begin{equation}\label{liminfricci}
K(t) \ge -B^2t^\alpha \qquad \text{when } \ t>t_0,
\end{equation}
for some $B>0$, $\alpha\ge -2$ and $t_0>0$, and the following
conditions hold:
\begin{equation}\label{condoscilla}
\begin{array}{l}
\text{for } \ \alpha=-2, \quad \disp \limsup_{t\ra +\infty}
\left(\int_{t_0}^t \sqrt{K(\sigma) +\frac{B^2}{\sigma^2}} \di
\sigma - \frac{\sqrt{1+4B^2}}{2}\log t\right) =
+\infty; \\[0.5cm]
\text{for } \ \alpha >-2, \quad \disp \limsup_{t\ra +\infty}
\left(\int_{t_0}^t \sqrt{K(\sigma)+B^2\sigma^\alpha} \di \sigma -
\frac{2B}{\alpha+2} t^{\frac{\alpha}{2}+1}\right) = +\infty.
\end{array}
\end{equation}
\item[$(2)$] $K$ satisfies
\begin{equation}\label{liminfricci22}
K(t) \ge \frac{B^2}{t^2} \qquad \text{when } \ t>t_0,
\end{equation}
for some $B\in [0, 1/2]$, $t_0>0$, and the following conditions
hold:
\begin{equation}\label{condoscilla22}
\begin{array}{l}
\text{for } \ B< \frac 12, \quad \disp \limsup_{t\ra +\infty}
\left(\int_{t_0}^t \sqrt{K(\sigma) -\frac{B^2}{\sigma^2}} \di
\sigma - \frac{\sqrt{1-4B^2}}{2}\log t\right) =
+\infty; \\[0.5cm]
\text{for } \ B = \frac 12, \quad \disp \limsup_{t\ra +\infty}
\left(\int_{t_0}^t \sqrt{K(\sigma) -\frac{1}{4\sigma^2}} \di
\sigma - \frac 12 \log\log t\right) = +\infty;
\end{array}
\end{equation}
\end{itemize}
\end{Theorem}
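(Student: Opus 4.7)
The plan is to reduce each case to part $(4)$ of Theorem \ref{main}, following the substitution $\tilde z = g/w$ used in the proof of Theorem \ref{compafinito}. If $w\in C^2((t_0,+\infty))$ is positive and $g$ solves $g''+Kg=0$, direct differentiation yields
\begin{equation*}
\bigl(w^2\tilde z'\bigr)' + \Bigl(K(t)+\tfrac{w''(t)}{w(t)}\Bigr)w^2(t)\,\tilde z(t) = 0.
\end{equation*}
I shall choose $w$ so that $K+w''/w \ge 0$ on $[t_0,+\infty)$ and $1/w^2\in L^1(+\infty)$. Then part $(4)$ of Theorem \ref{main}, applied on $[t_0,+\infty)$ with $v=f=w^2$, $A=K+w''/w$ (via Remark \ref{indeboliamo}, which allows initial data at $t_0>0$), ensures oscillation of $\tilde z$, whence of $g$ since $w>0$, whenever
\begin{equation}\label{starplan}
\limsup_{t\to+\infty}\int_{t_0}^{t}\Bigl(\sqrt{K(\sigma)+\tfrac{w''(\sigma)}{w(\sigma)}}-\sqrt{\chi_{w^2}(\sigma)}\Bigr)\di\sigma = +\infty.
\end{equation}

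In case $(1)$ I take $w$ to be the positive solution of $w''=B^2 t^\alpha w$: explicitly $w(t)=t^{B'}$ with $B'=(1+\sqrt{1+4B^2})/2$ for $\alpha=-2$, and a modified Bessel / hyperbolic sine solution of the type in \eqref{wespo} for $\alpha>-2$. Since then $w''/w=B^2 t^\alpha$, hypothesis \eqref{liminfricci} gives $K+w''/w\ge 0$. In case $(2)$ I take $w$ exactly as in \eqref{wpoli}, the positive Euler solution of $w''+B^2(1+t)^{-2}w=0$; then $w''/w=-B^2/(1+t)^2$ and, since $t^{-2}\ge(1+t)^{-2}$, hypothesis \eqref{liminfricci22} gives $K+w''/w\ge 0$. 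In every subcase the explicit form of $w$ makes $1/w^2$ integrable at infinity.

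What remains is to compute $\chi_{w^2}$ from \eqref{defchi} and verify that \eqref{condoscilla} (resp. \eqref{condoscilla22}) is exactly \eqref{starplan}. For the pure-power subcases ($\alpha=-2$ of $(1)$ and $B<1/2$ of $(2)$) the formula $\int_r^{+\infty}s^{-2\beta}\di s=r^{1-2\beta}/(2\beta-1)$ gives $\sqrt{\chi_{w^2}(r)}=\sqrt{1\pm 4B^2}/(2r)$, whose primitive is precisely the logarithmic counterterm appearing in \eqref{condoscilla}, \eqref{condoscilla22}. The borderline $B=1/2$ is equally elementary via the substitution $u=\log(1+s)$, yielding $\sqrt{\chi_{w^2}(r)}=[2(1+r)\log(1+r)]^{-1}$ with primitive $\tfrac12\log\log t+O(1)$. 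The only genuinely technical step, and the main obstacle, is the subcase $\alpha>-2$ of $(1)$: here I would feed the Bessel expansion $I_\nu(z)=e^z(2\pi z)^{-1/2}\bigl[1+O(z^{-1})\bigr]$ (or, for $\alpha\ge 0$, the explicit $\sinh$) into the alternative form $\sqrt{\chi_{w^2}(r)}=\bigl(-\tfrac12\log\!\int_r^{+\infty}\di s/w^2(s)\bigr)'$, extract $-\tfrac12\log\!\int_r^{+\infty}\di s/w^2=\tfrac{2B}{\alpha+2}r^{1+\alpha/2}+O(1)+O(r^{-(1+\alpha/2)})$, and differentiate to obtain $\sqrt{\chi_{w^2}(r)}=Br^{\alpha/2}+O(r^{-(2+\alpha/2)})$, whose error is integrable at infinity. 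Hence $\int_{t_0}^t\sqrt{\chi_{w^2}}\,\di\sigma=\tfrac{2B}{\alpha+2}t^{1+\alpha/2}+O(1)$, and \eqref{starplan} coincides, up to a bounded term that does not affect the $\limsup$, with the second line of \eqref{condoscilla}.
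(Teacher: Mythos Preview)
Your proposal is correct and follows the same strategy as the paper: transform via $\tilde z=g/w$ with an explicit $w$, then invoke part $(4)$ of Theorem~\ref{main}. Two simplifications in the paper are worth noting. First, rather than computing $\chi_{w^2}$ and integrating it, the paper uses Remark~\ref{remimpo} to rewrite \eqref{condforte} as
\[
\limsup_{t\to+\infty}\left(\int_{t_0}^t\sqrt{A(\sigma)}\,\di\sigma+\tfrac12\log\int_t^{+\infty}\frac{\di\sigma}{w^2(\sigma)}\right)=+\infty,
\]
so only the asymptotic of the tail integral $\int_t^{+\infty}w^{-2}$ is needed; this removes what you call ``the main obstacle'' in the $\alpha>-2$ subcase, since the Bessel expansion gives that asymptotic directly without the subsequent differentiation. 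Second, for case $(2)$ the paper does not take $w$ from \eqref{wpoli} but rather the unshifted solutions $w(t)=t^{B''}$ and $w(t)=\sqrt{t}\log t$ of $w''+B^2t^{-2}w=0$; these are genuine pure powers (unlike the functions in \eqref{wpoli}, which you then treat as pure powers anyway), they make the transformed potential exactly $K-B^2/t^2$, and the counterterm comes out as $\tfrac{\sqrt{1-4B^2}}{2}\log t$ on the nose, with no $O(1)$ discrepancies to track.
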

\begin{proof}
$(1)$. The equation $w''-B^2t^\alpha w=0$ on, say, $[1,+\infty)$ has the particular positive solution
\begin{equation}\label{leubuone}
\begin{array}{ll}
w(t) = \disp \sqrt{t}
I_{\frac{1}{2+\alpha}}\left(\frac{2B}{2+\alpha}t^{1+\frac{\alpha}{2}}\right)
& \quad \text{if } \ \alpha>-2; \\[0.5cm]
w(t) = \disp t^{B'}, \quad B' = \frac{1+\sqrt{1+4B^2}}{2} & \quad
\text{if } \ \alpha=-2,
\end{array}
\end{equation}
where $I_\nu(t)$ is the Bessel function of order $\nu$. From 
$$
\qquad I_\nu(t)= \frac{e^t}{\sqrt{2\pi t}}(1+o(1))\qquad \text{as
} \ t\ra +\infty
$$
(see \cite{lebedev}, p. 102), in
both cases $\alpha=-2$ and $\alpha>-2$ we deduce that $1/w^2 \in L^1(+\infty)$. Moreover,
\begin{equation}\label{asintobuone}
\int_t^{+\infty} \frac{\di \sigma}{w^2(\sigma)} \sim \left\{
\begin{array}{ll}
C \exp\left(-\frac{4B}{2+\alpha}t^{1+\frac{\alpha}{2}}\right)
& \quad \text{if } \ \alpha >-2; \\[0.5cm]
C t^{1-2B'} = Ct^{-\sqrt{1+4B^2}} & \quad \text{if } \ \alpha=-2.
\end{array}\right.
\end{equation}
Since the function $\widetilde{z}=g/w$ solves 
$$
(w^2 \widetilde{z}')' + (K+ B^2 t^\alpha) w^2 \widetilde{z} \le 0 \qquad \text{on } [1,+\infty), 
$$
by $(4)$ of Theorem \ref{main}, $z$ (and hence $g$) oscillates provided 
$$
\limsup_{t\ra +\infty} \int_{t_0}^t \Big( \sqrt{K(\sigma) + B^2 \sigma^\alpha}- \sqrt{\chi_{w^2}(\sigma)} \Big) \di \sigma = +\infty
$$
 which, by  Remark \ref{remimpo}, is equivalent to 
\begin{equation}\label{oscillagenerale}
\limsup_{t\ra +\infty} \int_{t_0}^t \sqrt{K(\sigma) + B^2 \sigma^\alpha}\di \sigma  + \frac 12 \log \int_t^{+\infty}\frac{\di \sigma}{w^2(\sigma)} = +\infty
\end{equation}
By \eqref{asintobuone}, conditions \eqref{condoscilla} and
\eqref{oscillagenerale} are equivalent, thus the conclusion.\\
$(2)$. The proof is the same. Indeed, it is enough to consider the
following positive solution $w$ of $w''+B^2t^{-2}w=0$:
\begin{equation}\label{lesolitepositive}
\begin{array}{ll}
w(t) = \disp t^{B''}, \quad B'' = \frac{1+\sqrt{1-4B^2}}{2} &
\quad
\text{if } \ B \in [0,1/2); \\[0.5cm]
w(t) = \disp \sqrt{t} \log t & \quad \text{if } \ B=1/2.
\end{array}
\end{equation}
Again, in both cases $1/w^2\in L^1(+\infty)$.
\end{proof}

\begin{Remark}
\emph{Note that, for $B=0$, we recover another proof of the original Calabi oscillation criterion, which is different from that described in the previous section.} 
\end{Remark}
Polynomial lower bounds for $K$ are clearly chosen for their simplicity. Indeed, the statement in its full generality only requires a positive solution $w$ of $w'' + Ww\ge 0$, where the weight $W$ has only to satisfy $K+W \ge 0$. In this way, arbitrary lower bounds for $K$ are allowed, and up to finding a suitable positive $w$ the oscillatory conditions are explicit. This improves on Moore oscillation criterion, where the existence of the limit in \eqref{condimoore} is essential for the proof of Theorem \ref{moorenehari} to work. The same discussion holds for Theorem \ref{compafinito}, up to the further requirement that $w$ is sufficiently well-behaved as $t\ra 0^+$. From this perspective, Theorem \ref{compafinito} improves on Theorem \ref{pmg}, whose proof seems to us to be hardly generalizable when the lower bound for $K$ is nonconstant.\\ 
\par
The procedure described above, which loosely speaking allows to translate the potential up to inserting a weight, can be iterated. In this way, we can obtain finer and finer criteria in a very simple way. We now describe how to proceed in this direction. The first example is the following
\begin{Theorem}[\textbf{Positivity and nonoscillation
criteria}]\label{nuovinonoscillatori}
Let $K \in L^\infty_\loc(\R^+_0)$.
\begin{itemize}
\item[$(1)$] Suppose that
\begin{equation}\label{newboundK1}
K(t) \le \frac{1}{4(1+t)^2}\left[1+\frac{1}{\log^2(1+t)}\right]
\qquad \text{on } \ \R^+.
\end{equation}
Then, every solution $g$ of
\begin{equation}\label{nonono}
\left\{\begin{array}{l} g''+K(t)g\ge 0 \\[0.2cm]
g(0)=0, \quad g'(0)=1
\end{array}\right.
\end{equation}
is positive on $\R^+$ and satisfies $g(t) \ge C\sqrt{t\log t}
\log\log t$, for some $C>0$ and for $t>3$.
\item[$(2)$] Suppose that
\begin{equation}\label{newboundK}
K(t) \le \frac{1}{4t^2}\left[1+\frac{1}{\log^2t}\right] \qquad
\text{on } \ [t_0,+\infty),
\end{equation}
for some $t_0>0$. Then, every solution $g$ of $g''+Kg=0$ is
nonoscillatory.
\end{itemize}
\end{Theorem}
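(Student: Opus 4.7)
The plan is to reduce both parts to Theorem~\ref{main}(1) and (2) via the substitution $\tilde z = g/w$, where $w$ is a positive solution of a suitable auxiliary Euler-type equation chosen so that the critical curve of $v = w^2$ matches exactly the refinement $\frac{1}{4(1+t)^2\log^2(1+t)}$ (resp.\ $\frac{1}{4t^2\log^2 t}$) appearing on the right-hand side of the hypothesis. In other words, the bound in the statement is tailored so that, after changing variable by the Euler factor, the new potential $A = K - \overline K$ sits precisely below $\chi_{w^2}$.

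For part (1) I would take $w(t) = \sqrt{1+t}\,\log(1+t)$, which a direct computation shows solves $w'' + \tfrac{1}{4(1+t)^2}w = 0$ on $\R^+_0$, with $w(0)=0$, $w'(0)=1$ and $w>0$ on $\R^+$. Let $g_0$ be the unique solution of the equality problem $g_0''+Kg_0=0$ with the same Cauchy data, and set $\tilde z_0 = g_0/w$, extended by $\tilde z_0(0)=g_0'(0)/w'(0)=1$ via L'H\^opital. A standard computation gives
\begin{equation*}
(w^2 \tilde z_0')' + A\,w^2 \tilde z_0 = 0, \qquad A(t) := K(t) - \frac{1}{4(1+t)^2},
\end{equation*}
and by \eqref{newboundK1} we have $A \le \tfrac{1}{4(1+t)^2\log^2(1+t)}$ on $\R^+$. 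The substitution $u=\log(1+s)$ yields $\int_t^{+\infty}\frac{\di s}{v(s)} = \frac{1}{\log(1+t)}$ for $v = w^2 = (1+t)\log^2(1+t)$, whence, by \eqref{defchi}, $\chi(t) = \tfrac{1}{4(1+t)^2\log^2(1+t)}$. Thus $A \le \chi$, and Theorem~\ref{main}(1) (with $f=v$) gives $\tilde z_0 > 0$ on $\R^+$ plus the lower bound $\tilde z_0(t) \ge C\,\log\log(1+t)/\sqrt{\log(1+t)}$ for $t$ large. Multiplying by $w(t)$ produces $g_0(t) \ge C'\sqrt{t\log t}\,\log\log t$ for $t>3$.

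To transfer the conclusion to an arbitrary solution $g$ of the inequality $g''+Kg\ge 0$ with the given initial data, I would use a Sturm-type comparison: the Wronskian-like quantity $\varphi := g_0 g' - g g_0'$ satisfies $\varphi(0)=0$ and $\varphi' = g_0(g''+Kg) - g(g_0''+Kg_0) = g_0(g''+Kg) \ge 0$ while $g_0\ge 0$. Hence $(g/g_0)' = \varphi/g_0^2 \ge 0$ wherever $g_0>0$, and since $\lim_{t\to 0^+}g/g_0 = 1$ one concludes $g\ge g_0$ on $\R^+$, transferring both positivity and the lower bound.

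For part (2) the argument is analogous and simpler: take $w(t) = \sqrt{t}\log t$ on $[r_0,+\infty)$ with $r_0 > \max(t_0,1)$; then $w>0$ solves $w'' + \tfrac{1}{4t^2}w = 0$. Setting $\tilde z = g/w$, the same change of variable yields $(w^2 \tilde z')' + A w^2 \tilde z = 0$ with $A = K - \tfrac{1}{4t^2} \le \tfrac{1}{4t^2\log^2 t}$, and the identical computation as in part (1) gives $\chi_{w^2}(t) = \tfrac{1}{4t^2\log^2 t}$, so $A\le \chi$ on $[r_0,+\infty)$. By Theorem~\ref{main}(2) (applied on $[r_0,+\infty)$, which is permitted by the Remark immediately following Theorem~\ref{main}), $\tilde z$ has only finitely many zeros on $[r_0,+\infty)$; since zeros of any nontrivial solution of $g''+Kg=0$ are isolated, $g$ is nonoscillatory. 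The main point that requires a bit of insight, rather than routine calculation, is recognizing that the iteration ``twist once by $\sqrt{1+t}$, then by $\log(1+t)$'' is precisely the one whose weight produces the critical curve appearing in \eqref{newboundK1}--\eqref{newboundK}; once this is established, both parts reduce to Theorem~\ref{main} plus a short Sturm comparison.
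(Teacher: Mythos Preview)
Your proposal is correct and follows essentially the same route as the paper: set $w(t)=\sqrt{1+t}\,\log(1+t)$ (respectively $w(t)=\sqrt{t}\,\log t$), pass to $\widetilde z=g/w$, identify $\chi_{w^2}(t)=\frac{1}{4(1+t)^2\log^2(1+t)}$ (respectively $\frac{1}{4t^2\log^2 t}$), and invoke Theorem~\ref{main}(1) and (2). The only cosmetic difference is that the paper first uses a Sturm reduction to assume in addition $K(t)\ge \frac{1}{4(1+t)^2}$ (so that the new potential $A=K-\frac{1}{4(1+t)^2}$ is nonnegative), whereas you apply Theorem~\ref{main}(1) directly with a possibly sign-changing $A$ and spell out the passage from the differential inequality to the equality via the explicit Wronskian $\varphi=g_0g'-gg_0'$; both variants are legitimate under the hypotheses stated for Theorem~\ref{main}.
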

\begin{proof}
$(1)$. By Sturm argument, it is sufficient to prove the desired
conclusion under the additional assumptions that $g$ satisfies
\eqref{nonono} with the equality sign, and that
$$
K(t) \ge \frac{1}{4(1+t)^2}.
$$
Let $w(t)=\sqrt{1+t}\log(1+t)$ be the solution of \eqref{nonono} with the equality sign and with
$K(t)= [4(1+t)^2]^{-1}$. Then, $\widetilde{z}=g/w$ solves
\begin{equation}
\left\{ \begin{array}{l} \disp (w^2\widetilde{z}')' +
\left[ K(s)-\frac{1}{4(1+t)^2}\right]w^2\widetilde{z} = 0 \quad \text{on } \R^+\\[0.4cm]
\widetilde{z}(0)=1, \qquad \widetilde{z}'(0)=0.
\end{array}\right.
\end{equation}
Applying $(1)$ of Theorem \ref{main}, $\widetilde{z}$ is
positive provided
$$
K(t) - \frac{1}{4(1+t)^2} \le \chi_{w^2}(t) =
\frac{1}{4(1+t)^2\log^2(1+t)},
$$
which is \eqref{newboundK1}, and $\widetilde{z}$ satisfies
$$
\widetilde{z}(t) \ge -C \sqrt{\int_t^{+\infty} \frac{\di
\sigma}{w^2(\sigma)}} \log \int_t^{+\infty} \frac{\di
\sigma}{w^2(\sigma)} = C\frac{\log\log t}{\sqrt{\log t}},
$$
for some $C>0$. The lower bound for $g$ follows at once by the
definition of
$\widetilde{z}$.\\
To prove $(2)$, again by Sturm argument we can assume that the
inequality $K(t)\ge 1/[4t^2]$ holds. Proceeding along
the same lines as for $(1)$ with the choice $w= \sqrt{t}\log t$, and using $(2)$ of Theorem \ref{main}, we reach the desired
conclusion.
\end{proof}

The next prototype case illustrates the sharpness of our criteria. Let
$$
K(t) = \frac{1}{4t^2} + \frac{c^2}{4t^2 \log^2t}, \qquad \text{on } [2,+\infty), 
$$
where $c>0$ is a constant. Then, applying $(2)$ of Theorem \ref{teo_calabi}, case $B=1/2$ we deduce that $g$ oscillates whenever $c>1$. On the other hand, if $c\le 1$, by Theorem \ref{nuovinonoscillatori} $g$ is nonoscillatory. However, on $[2,+\infty)$ 
$$
\frac 14 < k(t) = t \int_t^{+\infty}K(\sigma)\di \sigma \le \frac 14 +
t\frac{c^2}{4t}\int_t^{+\infty}
\frac{\di\sigma}{\sigma\log^2\sigma} = \frac 14 + \frac{c^2}{4\log
t},
$$
hence the Hille-Nehari criterion cannot detect neither the
oscillatory nor the nonoscillatory behaviour of $g$ depending on
$c$. Similarly, also Moore criterion is not sharp enough. The proof of Theorem \ref{nuovinonoscillatori} suggests an iterative improving procedure. In the general case, suppose that we are given an ordinary differential equation of the type $(vz')'+Avz=0$, with $v$ such that $\chi$ can be defined. By Sturm argument, there is no loss of generality if we assume that $A \ge \chi$. An explicit solution $w$ of 
$$
(vw')' + \chi vw =0
$$
is given by 
$$
w(t) = -\sqrt{\int_t^{+\infty} \frac{\di s}{v(s)}} \log \int_t^{+\infty} \frac{\di s}{v(s)},
$$
and it is positive on some intervall $[r_0,+\infty)$. Then, $\widetilde{z}= z/w$ solves 
$$
(\bar{v} \widetilde{z}')' + (A-\chi)\bar{v}\widetilde{z}=0 \qquad \text{on } [r_0,+\infty), 
$$
where $\bar{v}=vw^2$, which implies that $\widetilde{z}$, and therefore $z$, are nonoscillatory if $(vw^2)^{-1} \in L^1(+\infty)$ and 
$$
A(r) - \chi(r) \le \chi_{vw^2}(r), 
$$
and oscillatory if  $(vw^2)^{-1} \in L^1(+\infty)$ and 
$$
\limsup_{t \ra +\infty} \int_{t_0}^t \Big(\sqrt{A(s)-\chi(s)} - \sqrt{\chi_{vw^2}(s)}\Big)\di s = +\infty.
$$
Now, the procedure can be pushed a step further by considering $\widetilde{z}$. This enables us to construct finer and finer critical curves. As an example, we refine Theorem \ref{nuovinonoscillatori}. Suppose that 
$$
K(t) \ge \frac{1}{4t^2} + \frac{1}{4t^2 \log^2 t}
$$
on, say, $[2,+\infty)$. Then, as in the proof of Theorem \ref{nuovinonoscillatori}, define $w= \sqrt{t}\log t$ and $v=w^2 = t \log^2 t$. Since $w$ is a positive solution of $w'' + (4t^2)^{-1}w=0$ on some $[r_1,+\infty)$, $z=g/w$ is well defined and solves $(vz')'+Avz=0$ on $[r_1,+\infty)$, where 
$$
A(t) = K(t) - \frac{1}{4t^2} \ge \frac{1}{4t^2\log^2t} = \chi_{w^2}(t) = \chi(t).
$$
Now, the function
$$
w_2(t) = -\sqrt{\int_t^{+\infty} \frac{\di s}{v(s)}} \log \int_t^{+\infty} \frac{\di s}{v(s)} = \frac{\log \log t}{\sqrt{\log t}}
$$
is a solution of $(vw_2')'+ \chi vw_2=0$, positive after some $r_2\ge r_1$. Setting 
$$
v_2(t)= v(t)w_2(t)^2 = t \log t \log^2 \log t, 
$$
then
$$
\frac{1}{v_2(t)} \in L^1(+\infty),
$$
and the function $z_2 = z/w_2$ is a solution of $(v_2z_2')' + A_2 v_2 z_2 =0$ on $[r_2,+\infty)$, where
$$
A_2(t) = A(t) - \chi(t) = K(t) - \frac{1}{4t^2} - \frac{1}{4t^2 \log^2 t}  \ge 0.
$$
Thus $z_2$, and hence $z$ and $g$, is nonoscillatory provided
$$
A_2(t) \le \chi_{v_2}(t), \qquad \text{that is,} \qquad K(t) \le \frac{1}{4t^2} + \frac{1}{4t^2 \log^2 t} + \frac{1}{4t^2 \log^2 t \log^2 \log t},
$$
and, by \eqref{nuova}, it is oscillatory if
$$
\limsup_{t\ra +\infty} \left(\int_{t_2}^{t} \sqrt{K(\sigma) - \frac{1}{4\sigma^2} - \frac{1}{4\sigma^2 \log^2 \sigma}}\di \sigma 
- \frac 12 \log\log\log t\right) = +\infty.
$$

\begin{Remark}
\emph{We mention that, with the aid of the change of variables \eqref{tr} and \eqref{zr}, Theorems \ref{compafinito}, \ref{teo_calabi} and \ref{nuovinonoscillatori} can be applied to get sharp extensions of index estimates for stationary Schr\"odinger operators on $\R^m$, $m\ge 3$, that highly improve on classical results of M. Reed and B. Simon \cite{reedsimon3}, and W. Kirsch
and B. Simon \cite{kirschsimon}. The interested reader can consult \cite{bmr2}, Theorem 6.50.}
\end{Remark}

\bibliographystyle{amsplain}
\bibliography{biblio_oscillation}

\end{document}